\newtheorem{thm}{Theorem}[section]
\newtheorem{lem}[thm]{Lemma}
\newtheorem{prop}[thm]{Proposition}
\begin{document}
\title{Regularity of stationary  solutions to the  linearized Boltzmann equations }
\author[IC]{I-Kun Chen}
\address{Graduate School of Informatics,
Kyoto University, Yoshida-Honmachi,
Sakyo, Koto 6068501, Kyoto, Japan }
\email{ikun.chen@gmail.com}
\date{\today}
\maketitle
\begin{abstract}
We consider the regularity of  stationary solutions  to the linearized  Boltzmann equations in  bounded $C^1$ convex domains in $\mathbb{R}^3$  for gases with cutoff hard potential and cutoff Maxwellian gases. We prove that the stationary solutions solutions are  H\"{o}lder continuous with order $\frac1{2}^-$ away from the boundary provided the incoming data have the same  regularity.    The key idea is to partially transfer the regularity in velocity obtained by collision  to space through transport and collision.\end{abstract}
\section{introduction}

A thermal non-equilibrium stationary  solution, a resolution between the thermal dynamical tendency and boundary effects, reflects the complexity and richness of the Boltzmann equations. At linear and weakly nonlinear levels, the existence of such solutions  has been studied   by Guiraud  \cite{Guiraudlinear, Guiraudnonlinear} for convex domains and by  Esposito, Guo, Kim, and  Marra \cite{GuoKim} for nonconvex domains in $\mathbb{R}^3$. Regarding the regularity issue, as far as we know,  the best  result  is that the solution is continuous  away from the grazing set also in \cite{GuoKim}.  

For the corresponding time evolutionary weakly nonlinear problems, it was  studied by Kim, \cite{Kimdis}, that the discontinuity propagates from  the boundary to the interior of a non-convex domain on the tangent direction. The BV property of solutions in a non-convex domain was  studied by Guo, Kim, Tonon, and Trescases \cite{GKTT2}.  By the same authors,  regularity problem for convex domains was studied in \cite{GKTT}.  Especially, they established the weighted $C^1$ estimate for the specular reflection and diffuse reflection boundary conditions. However, the weighted $C^1$ norm grows severely with time, and therefore, could not give information to the stationary solution through large time behavior. This motivates us to look at the regularity to the stationary solution directly.   

The goal of this paper is to prove the interior regularity of the stationary linearized Boltzmann equation provided that the incoming data have the same regularity. After first laying  out the problem setting and definitions,  the precise statement of the main theorem will follow.

We consider the stationary linearized  Boltzmann equation 
\begin{equation}\label{SBE}
\zeta \cdot \nabla f(x,\zeta)=L(f),
\end{equation}
where $\zeta \in \mathbb{R}^3$ and $x\in\Omega$, a $C^1$ bounded convex domain in $\mathbb{R}^3$.     
The  linear collision operator $L$ here  is corresponding  to  the cutoff hard potential gases or cutoff Maxwellian gases, which are indicated by  $0\leq \gamma<1$  ( see \eqref{IP}). This cutoff was first introduced by Grad \cite{Grad},  and later the analysis was refined by Caflisch  \cite{Caflisch}.
Furthermore, we assume the cross section is a product of a  function of the length of relative velocity and a function of the deflecting angle as in \cite{ChenHsia}, i.e., we assume that the  cross section is nontrivial and satisfies\begin{equation}\label{IP}\begin{split}
 B(|\zeta_*-\zeta|,\theta)=|\zeta_*-\zeta|^{\gamma}\beta(\theta),\ \ 
0\leq\beta(\theta)\leq C\cos\theta\sin\theta.\end{split}
 \end{equation} Here, the cross section is for the binary collision operator
\begin{align}\label{QFF} Q(F,F)&=\int_{\mathbb{R}^3} \int_0^{2\pi}\int_0^{\frac{\pi}2}(F'F_*'-FF_*)B(|\zeta_*-\zeta|,\theta) d\theta d \epsilon d\xi_*\end{align}
before linearized around the standard Maxwellian $\pi^{-\frac32}e^{-|\zeta|^2}$.
  Under this assumption, $L$ has the following known properties (See \cite{Caflisch, ChenHsia, Grad}). $L$ can be decomposed into a multiplication operator and an integral operator: \begin{equation}
L(f)=-\nu(|\zeta|)f+K(f),
\end{equation}\\
where \begin{equation}K(f)(x,\zeta)=\int_{\mathbb{R}^3}k(\zeta,\zeta_*)f(x,\zeta_*)d\zeta_*\end{equation} is symmetric.  The explicit expression of $\nu$ is
\begin{equation}
\nu(|\zeta|)=\beta_0\int_{\mathbb{R}^3}e^{-|\eta|^2}|\eta-\zeta|^\gamma d\eta,
\end{equation}
where $\beta_0=\int_0^{\frac{\pi}2}\beta(\theta)d\theta$. Let $0<\delta<1$. The collision frequency $\nu(|\zeta|)$ and the collision kernel $k(\zeta,\zeta_*)$ satisfies\begin{align}&\nu_0(1+|\zeta|)^\gamma\leq \nu(|\zeta|)\leq \nu_1(1+|\zeta|)^\gamma,\\ \label{estimateK}
&|k(\zeta,\zeta_*)|\leq C_1|\zeta-\zeta_*|^{-1}(1+|\zeta|+|\zeta_*|)^{-(1-\gamma)}{e^{-{\frac{1-\delta}4}\left(|\zeta-\zeta_*|^2+(\frac{|\zeta|^2-|\zeta_*|^2}{|\zeta-\zeta_*|})^2\right)}},\\ \label{gradianK}&|\bigtriangledown_\zeta k(\zeta,\zeta_*)|\leq C_2\frac{1+|\zeta|}{|\zeta-\zeta_*|^{2}}(1+|\zeta|+|\zeta_*|)^{-(1-\gamma)}{e^{-{\frac{1-\delta}4}\left(|\zeta-\zeta_*|^2+(\frac{|\zeta|^2-|\zeta_*|^2}{|\zeta-\zeta_*|})^2\right)}}.
\end{align}
Here,  the constants $0<\nu_0<\nu_1$ may depend on the potential and  $C_1$ and $C_2$  may depend on $\delta$ and the potential. Related to the above estimates,  the following proposition from \cite{Caflisch} is crucial in our study.
\begin{prop} \label{cafdecay} For any $\epsilon, a_1, a_2>0$,
\begin{equation}
\Big|\int_{\mathbb{R}^3} \frac1{|\eta-\zeta_*|^{3-\epsilon}}e^{-a_1|\eta-\zeta_*|^2-a_2\frac{(|\eta|^2-|\zeta_*|^2)^2}{|\eta-\zeta_*|^2}}d\zeta_*\Big|\leq C_4 (1+|\eta|)^{-1}, \end{equation}
\noindent where $C_4$ may depend on $\epsilon, a_1,$ and $a_2$.
\end{prop}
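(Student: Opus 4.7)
\medskip

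\noindent\textbf{Proof plan.} The plan is to use the anisotropic structure of the exponent by introducing coordinates adapted to $\eta$, and to produce the desired $|\eta|^{-1}$ decay via a change of variables that converts the quadratic phase $|\eta|^2-|\zeta_*|^2$ into a linear translation inside a Gaussian.

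First I would translate by setting $u=\zeta_*-\eta$, so that $|\eta-\zeta_*|=|u|$ and
\[
|\eta|^2-|\zeta_*|^2=-2\eta\cdot u - |u|^2.
\]
Then I would decompose $u$ using spherical coordinates $(\rho,\phi,\theta)$ with $\rho=|u|$ and $\phi$ the angle between $u$ and $\eta$, so that $\eta\cdot u=|\eta|\rho\cos\phi$ and $du=\rho^2\sin\phi\,d\rho\,d\phi\,d\theta$. After this substitution the second exponent simplifies dramatically:
\[
\frac{(|\eta|^2-|\zeta_*|^2)^2}{|\eta-\zeta_*|^2}=\bigl(2|\eta|\cos\phi+\rho\bigr)^2,
\]
so the integral reduces (up to the trivial $2\pi$ from $\theta$) to
\[
2\pi\int_0^\pi\!\!\int_0^\infty \rho^{\epsilon-1}\,e^{-a_1\rho^2}\,e^{-a_2(2|\eta|\cos\phi+\rho)^2}\sin\phi\,d\rho\,d\phi.
\]

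The next step is to exploit the $\sin\phi\,d\phi$ measure by setting $t=2|\eta|\cos\phi$, which yields $dt=-2|\eta|\sin\phi\,d\phi$ and produces the key factor $1/|\eta|$. The $\phi$-integral becomes
\[
\frac{1}{2|\eta|}\int_{-2|\eta|}^{2|\eta|} e^{-a_2(t+\rho)^2}\,dt\ \leq\ \frac{1}{2|\eta|}\sqrt{\pi/a_2},
\]
after which the remaining $\rho$-integral $\int_0^\infty \rho^{\epsilon-1}e^{-a_1\rho^2}\,d\rho$ is finite precisely because $\epsilon>0$. This gives the bound $C/|\eta|$, valid whenever $|\eta|>0$.

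Finally, to obtain the $(1+|\eta|)^{-1}$ form, I would supplement this with a uniform $O(1)$ bound for small $|\eta|$: discarding the second exponential and passing to polar coordinates in $u$ gives $\int_0^\infty \rho^{\epsilon-1}e^{-a_1\rho^2}d\rho<\infty$, again using $\epsilon>0$. Combining the $C/|\eta|$ estimate for $|\eta|\geq 1$ with the uniform bound for $|\eta|\leq 1$ yields the conclusion. The only delicate point is the bookkeeping of the change of variables and ensuring the $\rho$-integral remains convergent at the origin, which is exactly what the hypothesis $\epsilon>0$ is there to guarantee.
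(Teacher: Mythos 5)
Your proof is correct, and it is clean and complete. The paper itself does not prove this proposition --- it cites Caflisch --- so there is no in-paper argument to compare against, but your argument is exactly the standard route (and almost certainly the one Caflisch uses): translate to $u=\zeta_*-\eta$ so that $|\eta|^2-|\zeta_*|^2=-\rho(\rho+2|\eta|\cos\phi)$ collapses the anisotropic exponent to $(\rho+2|\eta|\cos\phi)^2$, and then the substitution $t=2|\eta|\cos\phi$, whose Jacobian eats the $\sin\phi$ from the surface measure, is precisely where the factor $|\eta|^{-1}$ comes from, with the Gaussian tail in $t$ absorbing the rest. The $\epsilon>0$ hypothesis is used exactly where you say, to make $\int_0^\infty \rho^{\epsilon-1}e^{-a_1\rho^2}\,d\rho$ converge at $\rho=0$, and the splitting into $|\eta|\le 1$ (uniform bound, drop the second exponential) and $|\eta|\ge 1$ (the $C/|\eta|$ bound) correctly upgrades $|\eta|^{-1}$ to $(1+|\eta|)^{-1}$. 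One tiny bookkeeping remark: when you extend $\int_{-2|\eta|}^{2|\eta|}e^{-a_2(t+\rho)^2}\,dt$ to the whole line you are discarding nothing harmful since the integrand is nonnegative; it is worth a word in a final write-up, but it is not a gap.
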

\noindent

Suppose $x$ is a point inside  $\Omega$. We define $p(x,\zeta)$ to be the  boundary point that the backward trajectory from $x$ with velocity $\zeta$ touches. The corresponding traveling time is denoted by $\tau_-(x,\zeta)$. 
We view the integral operator $K$ as the source term:
\begin{equation}
\zeta  \nabla f(x,\zeta)+\nu(|\zeta|)f(x,\zeta)=K(f).
\end{equation}
Using the method of characteristics, we  derive the corresponding corresponding integral equation:\begin{equation}\label{inteq}\begin{split}
f(x,\zeta)&=f(p(x,\zeta),\zeta)e^{-\nu(|\zeta|) \tau_-(x,\zeta)}+\int_0^{\tau_-(x,\zeta)}e^{-\nu(|\zeta|) s}K(f)(x-\zeta s,\zeta)ds.
\end{split}\end{equation}
In this paper, we say $f$ is a solution to \eqref{SBE} if $f$ satisfies \eqref{inteq} almost everywhere.

 The boundary of $\Omega$ is denoted by $\partial\Omega$, and the outer normal is denoted by $\overrightarrow{n}$.  We define \begin{align}
&\Gamma_-:= \{(x,\zeta)|x\in\partial\Omega,\  \zeta\cdot\overrightarrow{n}(x)<0\}.
\end{align}
We consider norms as follows:
\begin{align}
\Vert g(\zeta)\Vert_{L^*_\zeta} &:=\left(\int_{\mathbb{R}^3} |g(\zeta)|^2\nu(|\zeta|)d\zeta \right)^\frac12,\\
\Vert f(x,\zeta)\Vert_{L^*_{x,\zeta}} &:=\left(\int_\Omega\int_{\mathbb{R}^3} |f(x,\zeta)|^2\nu(|\zeta|)d\zeta dx \right)^\frac12,\\
\Vert f(x,\zeta)\Vert _{L^\infty_xL^*_\zeta}&:=\sup_{x\in\Omega}\left(\int_{\mathbb{R}^3} |f(x,\zeta)|^2 \nu(|\zeta|)d\zeta\right)^\frac12,\\\Vert f(x,\zeta)\Vert _{L^\infty_{x,\zeta}}&:=\sup_{x\in\Omega, \zeta\in\mathbb{R}^3} |f(x,\zeta)|.\end{align}
The indices above denote the corresponding functional spaces.

The main conclusion in this paper is as follows. 
\begin{thm}Let  $0<\sigma<\frac12$ and $f\in L^*_{x,\zeta}$ be a stationary solution to the linearized Boltzmann equations \eqref{SBE},
\begin{equation*}
\zeta \cdot \nabla f(x,\zeta)=L(f),\end{equation*} in a $C^1$ bounded convex domain $\Omega$ in $\mathbb{R}^3$ for gases with cutoff hard potential or cutoff Maxwellian gases, i.e., the  corresponding cross sections  satisfy \eqref{IP} for $0\leq \gamma <1$. Suppose there exist $\phi(\zeta)\in L^*_{\zeta}$ and $M>0$ such that\begin{align}
|f(X,\eta)|&\leq \phi(\eta),\\
|f(X,\eta)-f(Y,\omega)|&\leq M\left( |\eta-\omega|^2+|X-Y|^2\right)^{\frac{\sigma}{2}} \end{align}
for any $(X,\eta)$, $(Y, \omega)\in\Gamma_-$.

Then, $f\in L^\infty_{x,\zeta}$. Furthermore,  there exists a constant $C_0$ depending only on $\Vert f\Vert_{L^\infty_{x,\zeta}}$, $\sigma$, $M$, $\Omega$, and the potential such that, for any  $x,\ y\in\Omega$ and $\zeta,\ \xi\in\mathbb{R}^3$,
\begin{equation} \label{holderd0}\begin{split}
&|f(x,\zeta)-f(y,\xi)|\leq  C_0(1+d_0^{-1})^3\left( |\zeta-\xi|^2+|x-y|^2\right)^{\frac{\sigma}{2}},\end{split}
\end{equation}
where $d_0$ is the distance of $x,\ y$ to $\partial \Omega$.
\end{thm}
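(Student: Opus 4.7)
The plan is to iterate the integral equation \eqref{inteq} once more and exploit two complementary smoothing mechanisms: the collision operator $K$ produces H\"older regularity in velocity (via the estimates \eqref{estimateK}, \eqref{gradianK} together with Caflisch's Proposition \ref{cafdecay}), while transport along characteristics combined with integration in $\zeta_*$ can be converted into spatial integration. The latter conversion is the technical heart of the argument: in the double iterate, the change of variables $y=(x-\zeta s)-\zeta_* r$ at fixed $r$ (with Jacobian $r^{-3}$) turns the $\zeta_*$-integration into a spatial integration, and convexity of $\Omega$ makes the constraint $r\le \tau_-(x-\zeta s,\zeta_*)$ equivalent to the clean condition $y\in\Omega$. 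This is the mechanism by which velocity regularity of $K(f)$ is transferred into spatial regularity of $f$ through the combined transport and collision structure.

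\medskip

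Concretely, substituting \eqref{inteq} into $K(f)$ yields a decomposition $f=J_1+J_2+J_3$, where $J_1$ is the direct boundary term $f(p,\zeta)\,e^{-\nu\tau_-}$, $J_2$ is the single-collision boundary term, and $J_3$ is the double iterate involving $K^2 f$ along the backward trajectory. To establish $f\in L^\infty_{x,\zeta}$ I would bound $J_1,J_2$ pointwise by $\phi$, while for $J_3$ the change of variables combined with Proposition \ref{cafdecay} and the assumption $\|f\|_{L^*_{x,\zeta}}<\infty$ yields boundedness. Once $f\in L^\infty_{x,\zeta}$, a standard splitting of $K(f)(x,\zeta)-K(f)(x,\zeta')$ (using the $L^\infty$ bound on a small ball of radius $\epsilon$ around $\zeta$, using \eqref{gradianK} on its complement, then optimizing $\epsilon$) gives H\"older regularity of $K(f)$ in $\zeta$ of any order strictly less than one. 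Inserting this into the change-of-variables representation of $J_3$, a spatial shift $x\to x'$ propagates as a velocity shift of size $|x-x'|/r$ in the last argument of $K(f)$; the resulting contribution is bounded by $|x-x'|^\sigma$ times an $r$-integral made convergent by the exponential decays of $k$ and $e^{-\nu(\cdot)r}$, which together control both the small-$r$ regime (where $|\zeta_*|\to\infty$) and the large-$r$ regime. The $\zeta$-H\"older estimate for $J_3$ follows by an analogous analysis combined with the $\zeta$-regularity of the exit map $(p,\tau_-)$.

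\medskip

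The boundary terms $J_1,J_2$ inherit H\"older regularity directly from the boundary-data assumption and the regularity of the backward exit map. The main obstacle lies in the combination of the critical exponent $\sigma<1/2$ with the boundary-singular constants that appear in the $d_0^{-1}$ estimates. After the change of variables, the spatial H\"older modulus of $J_3$ scales schematically as $r^{-3-\sigma}|x-x'|^\sigma$, and to obtain the clean dependence stated in \eqref{holderd0} one must split the $r$-integration according to the distance of the backward trajectory from the grazing set of $\partial\Omega$, balancing the $L^\infty$ bound (used near grazing) against the H\"older bound (used away from grazing) with a cutoff depending on $d_0$ and $|x-y|$. The cumulative contribution of the Jacobian $r^{-3}$ restricted to trajectories remaining at positive distance from $\partial\Omega$, together with the singular factor from differentiating $\tau_-$ near the grazing set, produces the three powers of $d_0^{-1}$ appearing in the final estimate.
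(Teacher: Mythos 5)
Your overall architecture matches the paper's: iterate \eqref{inteq} once, convert the inner velocity integral to a spatial integral over $\Omega$ via a change of variables, and then exploit the Lipschitz regularity of $K(f)$ in velocity (Proposition \ref{holdervelocity}) to extract H\"older regularity in space. The parallax mechanism you allude to (a fixed spatial shift produces a small angular shift when viewed from far away) is precisely the paper's Proposition \ref{anglediff}, and the near/far splitting you would need is the paper's decomposition of $\Omega$ into $\Omega\setminus B(x_0,2d^{1/2})$ and $\Omega\cap B(x_0,2d^{1/2})$. The preservation of boundary regularity by the first two terms, dyadically decomposed in distance to the boundary, is also in the right spirit.

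However, there is a genuine gap in the $L^\infty_{x,\zeta}$ step. You claim that the change of variables together with Proposition \ref{cafdecay} and $\|f\|_{L^*_{x,\zeta}}<\infty$ yields boundedness of $J_3$ directly. It does not: after the change of variables the $\rho$-integral (or $\eta$-integral) is controlled by the Caflisch decay, but what remains is a spatial convolution of $\|f(y,\cdot)\|_{L^*_\zeta}$ against a kernel behaving like $|x-y|^{-(2-\alpha)}$ for $\alpha<1$. This kernel lies only in $L^p_{\rm loc}(\mathbb{R}^3)$ for $p<3/(2-\alpha)<3$, so Cauchy--Schwarz against $\|f(\cdot,\cdot)\|_{L^*_\zeta}\in L^2_x$ gives a divergent integral. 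The paper instead proves the convolution inequality $\|\tilde f(x,\cdot)\|_{L^*_\zeta}^2\le C\,\bigl(\chi_{(0,R)}(|x|)/|x|^{2-\alpha}\bigr)*\|f(x,\cdot)\|_{L^*_\zeta}^2$ for the \emph{single} iterate, and then runs a Young-inequality bootstrap in finitely many steps to pass from $L^*_{x,\zeta}$ to $L^\infty_x L^*_\zeta$; the restriction $\gamma<1$ enters exactly here (the hard-sphere case $\gamma=1$ is barely missed). You would need the same iterative gain of integrability.

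A second, related gap: bounding $J_1$ pointwise by $\phi$ only yields $f(p(x,\zeta),\zeta)\le\phi(\zeta)\in L^*_\zeta$, which is not an $L^\infty_\zeta$ bound. To reach $L^\infty_{x,\zeta}$ the paper first establishes $f\in L^\infty_xL^*_\zeta$ by the bootstrap above, and then \emph{combines} the resulting $L^*_\zeta$ bound at the boundary with the H\"older assumption on the incoming data to interpolate, obtaining $|f(X,\zeta)|\le C M^{3/(3+2\sigma)}\|f\|_{L^\infty_x L^*_\zeta}^{2\sigma/(3+2\sigma)}$ on $\Gamma_-$. Only then does \eqref{inteq} give the interior $L^\infty_{x,\zeta}$ bound. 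Your proposal skips this interpolation and so does not actually deliver $f\in L^\infty_{x,\zeta}$.

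Finally, a minor point of precision: the near/far split in the H\"older estimate for $J_3$ is governed by $|x_0-x_1|$ (the parallax cutoff $2|x_0-x_1|^{1/2}$), not by the distance of the trajectory to the grazing set. The $(1+d_0^{-1})^3$ factor accumulates separately, from the Lipschitz estimates for $p(\cdot,\zeta)$ and $\tau_-$ (Propositions \ref{G12} and \ref{Geotheta}), from case splits on whether $|x-y|$ or $|\zeta_1-\zeta_2|$ is small compared to $d_0$, and from the dyadic decomposition used for $II$. Conflating the two sources makes the bookkeeping go wrong.
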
 

 Notice that $L^*_{x,\zeta}$ is the very functional space for the existence results in \cite{GuoKim, Guiraudlinear}. Also, notice that the inequality \eqref{holderd0} breaks down at boundary,  while some observations from the  explicit simple example  of the thermal transport problem may suggest so (see \cite{ CFLT, CLT,  Takata}). Although the hard sphere case is barely missed by the theorem, if we know solution is bounded a priori, we can recover the theorem for $\gamma=1$.

Our strategy is as follows.
We break the theorem down into three parts:  $L^*_{x,\zeta}\to L^\infty_xL^*_\zeta$, $ L^\infty_xL^*_\zeta\to L^\infty_{x,\zeta}$, and   $L^\infty_{x,\zeta}$ to H\"{o}lder continuity.  

In  one space dimensional case,  Golse and Poupaud used a bootstrap strategy to boost the integrability of solutions from $L^*_{x,\zeta}$ to $ L^\infty_xL^*_\zeta$ in finite  steps for Milne and Kramers' problems,  \cite{GP}. The key ingredient is to convert an estimate in velocity to a convolution in space, thanks to the transport and collision nature of Boltzmann equations and the simple geometry of one space dimension.  For the three dimensional problem, we manage to overcome the obstacle in geometry and extend the method to obtain $ L^\infty_xL^*_\zeta$ regularity.  Then, together with the assumption of H\"{o}lder continuity for the incoming data, we can show the solution is a bounded function in Section \ref{convolution}.  

It is well known that the integral operator $K$ can improve the regularity in velocity.  For the regularity in space, the H\"{o}lder continuity of $K(f)$ in space for solutions to one space dimensional equations is proved in \cite{IKC} for hard sphere gases, again thanks to the simple geometry of one space dimension. This observation also plays an important role in \cite{ChenHsia}. More precisely, in one space dimensional case, any two points  can be connected by a trajectory,  either forward
or backward, for almost all velocity. This is certainly not true in a three dimensional space.

 To carry out the H\"{o}lder continuity  for the three space dimensional problem we consider, we first iterate the integral equation once more:\begin{equation}\label{123def}\begin{split}
&f(x,\zeta)=f(p(x,\zeta),\zeta)e^{-\nu(|\zeta|) \tau_-(x,\zeta)}\\&
+\int_0^{\tau_-(x,\zeta)}\int_{\mathbb{R}^3}e^{-\nu(|\zeta|) s}k(\zeta,\zeta')e^{-\nu(|\zeta'|) \tau_-(x-\zeta s,\zeta')}f\big(p(x-\zeta s,\zeta'),\zeta'\big)d\zeta'ds\\
&+\int_0^{\tau_-(x,\zeta)}\int_{\mathbb{R}^3}\int_0^{\tau_-(x-\zeta s,\zeta')}e^{-\nu(|\zeta|) s}k(\zeta,\zeta')e^{-\nu(|\zeta'|) t}K(f)(x-\zeta s-\zeta't,\zeta')dtd\zeta'ds\\&=:I+II+III.
\end{split}\end{equation}

The $I$ and $II$ can preserve the regularity of $\Gamma_-$ due to the nature of the transport equation.  The key step is to prove the increasing of regularity in $III$.

The smoothing effect of $K$ in velocity has been studied, for example, in  \cite{IKC, ChenHsia, LiuYu1}. The key observation in this study is that, in $III$,  the regularity in velocity of $K(f)$ can be partially transferred   to space due to transport and collision.  In order to connect velocity to space, in Section \ref{Mix}, we  change  coordinates in the inner integral in $III$ to space over $\Omega$ and length of velocity over $\mathbb{R}^+$.  Due to singularity of the integrand in that formula, we are not able to push the regularity in space to differentiable.  However, using the fact that angle difference from an observer toward two different points, the  parallax, becomes smaller when the observer becomes farer away and balance the contribution from nearby by the smallness of domain of integration, we are able to obtain local H\"{o}lder continuity of $III$ to the order of $\frac12$ . 

  Readers familiar to the time evolutionary Boltzmann equations in the whole Euclidean space  may find an analogy to the Mixture Lemma studied by Liu and Yu \cite{LiuYu1, LiuYu2}, and later extend by  Kuo, Liu, and Noh \cite{KuoLiuNoh}, and  Wu \cite{ WuKungChien}, that is,  the regularity in velocity can be transfer to space through collision and transport.    This kind of idea can be traced back to   the famous Averaging Lemma by Golse, Perthame, and Sentis \cite{GolseAve}.  
In the context of stationary solution in a convex domain, despite of the same spirit, the new technique  to carry out the mixture effect as explained above is quite different. 
   
The plan of this paper is as follows. In Section \ref{Mix}, we study the key mixture effect. Then,   we investigate the geometric properties of a convex domain in Section \ref{convexdomain}, where we  show the local H\"{o}lder continuity of $III$. In section \ref{Boundaryreg}, we  show that the regularity of the boundary data are preserved by first and second iterations of the transport equation, $I$ and $II$. Eventually, we return to increasing of integrability from $L^*_{x,\zeta}$ to $L^\infty_{x,\zeta}$ in section \ref{convolution}  and complete our proof.

\section{\label{Mix} Gaining regularity from collision and transport}
We will elaborate the smoothing effect due to the combination of collision and transport in this section. We first  convert the formula by interplaying between velocity and space.
 We  change $\zeta'$ to the spherical  coordinates so that
\begin{equation}
\zeta'=(\rho\cos\theta,\rho\sin \theta\cos\phi,\rho\sin \theta\sin \phi).
\end{equation} 
Also, we change traveling time to the traveling distance:
\begin{equation}
r=\rho t.
\end{equation}
Let $\hat{\zeta'}=\frac{\zeta'}{|\zeta'|}.$
Then, 
\begin{equation}
\begin{split}
III=&\int_0^{\tau_-(x,\zeta)}e^{-\nu(|\zeta|) s}\int_0^\infty\int_0^{\pi}\int_0^{2\pi}\int_0^{|\overline{xp(x-s\zeta,\zeta')}|}\\&\quad  k(\zeta,\zeta')e^{-\frac{\nu(\rho) }\rho r}K(f)(x-\zeta s-\hat{\zeta'}r,\zeta')\rho\sin\theta drd\phi d\theta d\rho ds\\=&: \int_0^{\tau_-(x,\zeta)}e^{-\nu(|\zeta|) s}G(x-\zeta s,\zeta)ds.
\end{split}
\end{equation}
 Notice that  we can parametrize $\Omega$ by $\theta$, $\phi$, and $r$, thanks to the convexity.  Therefore, by regrouping the integrals, we can change the formulation to contain an  integral over space.  Let $x_0=x-\zeta s$ and $y=x-\zeta s-\hat{\zeta'}r$. We have

\begin{equation}\label{Gformula}
\begin{split}
&G(x_0,\zeta)=\\&\quad\int_0^\infty\int_{\Omega} k(\zeta,\frac{(x_0-y)\rho}{|x_0-y|})e^{-\nu(\rho)\frac{|x_0-y|}{\rho}}K(f)(y, \frac{(x_0-y)\rho}{|x_0-y|})\frac{\rho}{|x_0-y|^2}dyd\rho.
\end{split}
\end{equation}
Notice that if we differentiate the above formula directly respect to $x_0$, it will become not integrable in $y$. However, we can still obtain a lower regularity from this expression. 
The main purpose of this section  is to prove the following lemma.
\begin{lem}\label{MixLemma}
Suppose $f(x,\zeta)\in L^{\infty}_{x,\zeta}$ is a solution to \eqref{SBE} and $x_0$, $x_1\in \Omega$.
Then,  there exist $C_3$ only depending on the $\Omega$ and the potential  such that \begin{equation}
|G(x_0,\zeta)-G(x_1,\zeta)|\leq C_3\Vert f\Vert_{L^\infty_{x,\zeta}}|x_0-x_1|^{\frac12}.
\end{equation}
\end{lem}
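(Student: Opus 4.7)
My plan is to split the space integral in \eqref{Gformula} according to proximity to $x_0$ and $x_1$, balancing a small-volume estimate on a near piece against a mean-value estimate on the far piece. Writing
\[
F(x,y,\zeta) := \int_0^\infty \frac{\rho\, k(\zeta,\rho\hat v)\,K(f)(y,\rho\hat v)}{|x-y|^2}\, e^{-\nu(\rho)|x-y|/\rho}\,d\rho,\qquad \hat v := \frac{x-y}{|x-y|},
\]
we have $G(x_j,\zeta) = \int_\Omega F(x_j,y,\zeta)\,dy$ for $j=0,1$. Set $h := |x_0-x_1|$. The bound is trivial when $h\ge 1/4$ since $|G|\le C\|f\|_\infty$ uniformly, so I may assume $h<1/4$ and decompose $\Omega = N_0\cup N_1\cup E$ with $N_j := \{y\in\Omega:|y-x_j|<h^{1/2}\}$ and $E := \Omega\setminus(N_0\cup N_1)$.

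On the near piece $N_j$ I would introduce spherical coordinates $y = x_j - r\hat v$ centred at $x_j$; the Jacobian $dy = r^2\,dr\,d\sigma(\hat v)$ exactly cancels $|x_j-y|^{-2}$. Reassembling $\rho\,d\rho\,d\sigma(\hat v) = d\zeta'/|\zeta'|$ and using $|K(f)(y,\zeta')|\le C\|f\|_\infty(1+|\zeta'|)^{-(2-\gamma)}$ (deduced from \eqref{estimateK} together with Proposition~\ref{cafdecay}), one obtains
\[
\int_{N_j}|F(x_j,y,\zeta)|\,dy \le C\|f\|_\infty h^{1/2}\int_{\mathbb R^3}\frac{|k(\zeta,\zeta')|}{|\zeta'|(1+|\zeta'|)^{2-\gamma}}\,d\zeta'\le C\|f\|_\infty h^{1/2},
\]
the final velocity integral being uniformly bounded in $\zeta$ once the singularities at $\zeta'=0$ and $\zeta'=\zeta$ are treated separately.

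On the far piece $E$ I would apply the mean value inequality along the segment $x_* = x_1 + t(x_0-x_1)$, $t\in[0,1]$. Because $|y-x_j|\ge h^{1/2}$ on $E$ and $h<1/4$, we have $|y-x_*|\ge h^{1/2}/2$ throughout. Differentiating $F$ in $x$ produces four contributions from $\nabla_x|x-y|^{-2}$, from $\nabla_x\hat v$ entering both $k(\zeta,\rho\hat v)$ and $K(f)(y,\rho\hat v)$, and from $\nabla_x e^{-\nu r/\rho}$; bounding each using \eqref{estimateK}, \eqref{gradianK} and the derived estimate $|\nabla_{\zeta'}K(f)|\le C\|f\|_\infty$, and then recombining $\rho^2\,d\rho\,d\sigma(\hat v)=d\zeta'$ so that each velocity integral falls under Proposition~\ref{cafdecay}, I expect a pointwise bound
\[
|\nabla_x F(x_*,y,\zeta)|\le C\|f\|_\infty\!\left(\frac{1}{|y-x_*|^3}+\frac{1}{|y-x_*|^2}\right).
\]
Polar coordinates in $y$ centred at $x_*$ then yield $\int_E |\nabla_x F|\,dy \le C\|f\|_\infty(1+\log(1/h))$, and hence
\[
\int_E |F(x_0,y,\zeta)-F(x_1,y,\zeta)|\,dy \le C\|f\|_\infty\, h(1+\log(1/h)) \le C\|f\|_\infty h^{1/2}
\]
for $h<1/4$.

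The main technical obstacle is keeping all velocity estimates uniform in $\zeta$: the $\rho$-integral at fixed angle $\hat v$ is not uniformly bounded when $\hat v$ is aligned with $\hat\zeta$, so one cannot integrate in $\rho$ first and then in $\hat v$. The fix is to reassemble $d\rho\,d\sigma(\hat v)$ into $d\zeta'/|\zeta'|^2$ (or its $\rho^n$-weighted variants) at each step, reducing all velocity estimates to integrals covered by Proposition~\ref{cafdecay} and the decay structure of $k$ and $\nabla_{\zeta'}k$ given by \eqref{estimateK}--\eqref{gradianK}. Once this uniform control is secured, summing the three contributions gives the bound claimed in the lemma, with $C_3$ depending only on $\Omega$ and the potential.
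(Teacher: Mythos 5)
Your proposal is correct and rests on the same core machinery as the paper's proof (the spherical change of variables $\rho,\hat v\mapsto\zeta'$, the decay of $k$ via Proposition \ref{cafdecay}, the Lipschitz bound on $K(f)$ in velocity from Proposition \ref{holdervelocity}, and a near/far balance in $y$), but the bookkeeping is organized differently. The paper first adds and subtracts so that the difference splits into $G_K$ (variation of the $K(f)$ argument only) and $G_O$ (variation of the kernel factor only), and then uses \emph{two} near/far decompositions with radii tuned separately to $2d^{1/2}$ for $G_K$ and $2d$ for $G_O$; the far estimate for $G_K$ invokes an explicit parallax proposition to bound the angle $\theta$ uniformly by $d^{1/2}$. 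You instead keep the whole integrand together, make a single split at radius $h^{1/2}$, and hit the entire far piece with a mean-value-along-the-segment argument, differentiating $|x-y|^{-2}$, $k$, $K(f)$, and the exponential simultaneously; the role of the parallax proposition is absorbed into the elementary fact $|\nabla_x\hat v|\sim|x-y|^{-1}$. This is a genuine, and slightly leaner, reorganization: you avoid the add-and-subtract and the separate parallax lemma, at the cost of a coarser intermediate far-field bound $h(1+\log(1/h))$ rather than the paper's $d$ from the $G_O$ far piece, but that is still $\lesssim h^{1/2}$. You also correctly identify the one real pitfall, namely that the $\rho$-integral at fixed $\hat v$ is not uniformly bounded near $\hat v\parallel\hat\zeta$, so the ``pointwise'' bound $|\nabla_xF|\lesssim r^{-3}+r^{-2}$ only becomes legitimate after the angular $\hat v$-integration is merged with the $\rho$-integration into a $d\zeta'$-integral; this is precisely the manipulation the paper performs in its $A_3$ estimate, so your fix is the right one. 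With that clarification made explicit in the far-field step, the proposal gives a complete proof of the lemma.
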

We will follow the strategy explained in the introduction.
As mentioned, the smoothing effect of $K$ in velocity is a well known fact. It was used  to study  Green's functions of the Boltzmann equations in \cite{LiuYu1}.
In \cite{IKC}, the locally H\"{o}lder continuity of $K(f)$ in velocity  has been shown for $f\in L^2_\zeta$  for hard sphere gases.  Here, we will use an estimate appeared in \cite{ChenHsia}. \begin{prop}\label{holdervelocity}
Suppose  $1\leq p \leq \infty$ and $f\in L^p_\zeta$.  Then, there exists $C_p$ depending only on $p$ and the potential such that  
\begin{equation}
\Vert\bigtriangledown_\zeta K(f)(\zeta)\Vert_{L^p_\zeta}\leq C_p\Vert f\Vert_{L^p_\zeta} 
.\end{equation}
\end{prop}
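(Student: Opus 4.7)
The plan is to view $\nabla_\zeta K$ as an integral operator with kernel $\nabla_\zeta k(\zeta,\zeta_*)$ and obtain the $L^p_\zeta$ bound by Schur's test at the two endpoints. Differentiation under the integral sign is justified by the pointwise estimate \eqref{gradianK}, so that
\[
\nabla_\zeta K(f)(\zeta) = \int_{\mathbb{R}^3} \nabla_\zeta k(\zeta,\zeta_*)\,f(\zeta_*)\,d\zeta_*.
\]
If $A:=\sup_\zeta\int |\nabla_\zeta k(\zeta,\zeta_*)|\,d\zeta_*$ and $B:=\sup_{\zeta_*}\int |\nabla_\zeta k(\zeta,\zeta_*)|\,d\zeta$ are both finite, then Schur's test yields $\|\nabla_\zeta K(f)\|_{L^p_\zeta}\le A^{1/p'}B^{1/p}\|f\|_{L^p_\zeta}$ for every $1\le p\le\infty$ simultaneously, so the task reduces to bounding $A$ and $B$ by absolute constants depending only on $\gamma$, $\delta$, and the constants in \eqref{gradianK}.

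For $A$, the key step is to observe that $(1+|\zeta|)(1+|\zeta|+|\zeta_*|)^{-(1-\gamma)}\le (1+|\zeta|)^\gamma$, which follows from $1+|\zeta|\le 1+|\zeta|+|\zeta_*|$ together with $0\le\gamma<1$. Substituting this into \eqref{gradianK} puts the integrand into exactly the form required by Proposition \ref{cafdecay} with $\epsilon=1$ (so $|\zeta-\zeta_*|^{-(3-\epsilon)}=|\zeta-\zeta_*|^{-2}$ matches the diagonal singularity), and the proposition applied with $\eta=\zeta$ gives
\[
\int |\nabla_\zeta k(\zeta,\zeta_*)|\,d\zeta_* \le C\,(1+|\zeta|)^\gamma\cdot(1+|\zeta|)^{-1}\le C,
\]
since $\gamma<1$; hence $A<\infty$. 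For $B$, the Gaussian factor in \eqref{gradianK} and the singular factor $|\zeta-\zeta_*|^{-2}$ are symmetric in $\zeta\leftrightarrow\zeta_*$, while the only asymmetric polynomial piece is handled by $(1+|\zeta|)(1+|\zeta|+|\zeta_*|)^{-(1-\gamma)}\le (1+|\zeta|+|\zeta_*|)^\gamma\le C(1+|\zeta_*|)^\gamma(1+|\zeta-\zeta_*|)^\gamma$. I would then absorb the factor $(1+|\zeta-\zeta_*|)^\gamma$ into the Gaussian $e^{-\frac{1-\delta}{4}|\zeta-\zeta_*|^2}$ at the cost of reducing the exponent slightly, and apply Proposition \ref{cafdecay} again with the roles of the variables reversed ($\eta=\zeta_*$), obtaining
\[
\int |\nabla_\zeta k(\zeta,\zeta_*)|\,d\zeta \le C(1+|\zeta_*|)^\gamma(1+|\zeta_*|)^{-1}=C(1+|\zeta_*|)^{\gamma-1}\le C,
\]
again using $\gamma<1$.

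The main obstacle is the borderline integrability of the kernel: $\nabla_\zeta k$ has a diagonal singularity $|\zeta-\zeta_*|^{-2}$, one order worse than the $|\zeta-\zeta_*|^{-1}$ of $k$ itself (whose $L^p$ boundedness is classical), together with the unfavorable prefactor $(1+|\zeta|)$. The cancellation that saves the estimate is supplied by the cross-exponential $\exp\!\bigl(-\tfrac{1-\delta}{4}\bigl(\tfrac{|\zeta|^2-|\zeta_*|^2}{|\zeta-\zeta_*|}\bigr)^2\bigr)$ in \eqref{gradianK}, which through Caflisch's Proposition \ref{cafdecay} manufactures precisely the factor $(1+|\zeta|)^{-1}$ needed to kill the growth; this is also exactly the reason the hard-sphere case $\gamma=1$ is excluded, since at $\gamma=1$ the bound $C(1+|\zeta_*|)^{\gamma-1}$ just fails to be uniformly bounded.
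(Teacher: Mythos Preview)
Your proof is correct and follows exactly the approach sketched in the paper: bound both $\sup_\zeta\int|\nabla_\zeta k|\,d\zeta_*$ and $\sup_{\zeta_*}\int|\nabla_\zeta k|\,d\zeta$ via \eqref{gradianK} and Proposition~\ref{cafdecay}, then invoke the Schur/Young argument. One small correction to your closing remark: the bound $C(1+|\zeta_*|)^{\gamma-1}$ is still uniformly bounded at $\gamma=1$ (it equals $C$), so this proposition does \emph{not} exclude the hard-sphere case; the restriction $\gamma<1$ in the paper enters elsewhere, in the integrability argument of Section~\ref{convolution}.
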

Using \eqref{gradianK}  and Proposition  \ref{cafdecay}  we can conclude that both $\Vert \bigtriangledown_\zeta k(\zeta,\zeta_*) \Vert_{L^\infty_\zeta L^1_{\zeta_*}}$ 
and $\Vert \bigtriangledown_\zeta k(\zeta,\zeta_*) \Vert_{L^\infty_{\zeta_*} L^1_\zeta}$ are bounded. Therefore, we can prove the Proposition \ref{holdervelocity} by an argument similar to the proof of Young's inequality.

In addition to the above formula, we also need an estimate for  the parallax  to transfer the regularity from velocity to space.

\begin{prop}[Estimate for the parallax]\label{anglediff}Let $x_0$, $x_1$, and $y\in \mathbb{R}^3$ and $0<a<1$. We denote $|x_0-x_1|=d$ and  $\angle {x_0yx_1}=\theta$. If  $|y-x_0|>2d^a$ and $d\leq 1$, then \begin{equation}\theta<\frac{\pi }4d^{1-a}.\end{equation}
\end{prop}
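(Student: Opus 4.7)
The plan is to reduce this to elementary triangle geometry in the plane spanned by $x_0$, $x_1$, and $y$. The configuration is a triangle with vertices $x_0$, $x_1$, $y$; the angle $\theta$ sits at $y$, and the side opposite to it has length $|x_0-x_1|=d$. Since the hypothesis controls the side $|y-x_0|$ from below, the law of sines should give an almost immediate bound on $\sin\theta$.

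Concretely, I would first invoke the law of sines in triangle $x_0 y x_1$ to write
\[
\sin\theta \;=\; \frac{d\,\sin(\angle y x_1 x_0)}{|y-x_0|}
\;\leq\; \frac{d}{|y-x_0|}
\;<\;\frac{d}{2d^{a}}\;=\;\tfrac{1}{2}d^{1-a}.
\]
Since $d\leq 1$ and $0<a<1$, we have $d^{1-a}\leq 1$, hence $\sin\theta\leq \tfrac12$. The only subtlety is that $\sin\theta\leq \tfrac12$ admits two ranges of $\theta$, so I need to exclude the possibility $\theta\in[5\pi/6,\pi]$. This is done by the law of cosines: if $\theta$ were that large, then $\cos\theta\leq -\tfrac{\sqrt3}{2}$ would force
\[
d^{2}=|y-x_0|^{2}+|y-x_1|^{2}-2|y-x_0||y-x_1|\cos\theta\;\geq\;|y-x_0|^{2}>4d^{2a},
\]
contradicting $d\leq 1$ and $a<1$ (which imply $d\geq d^{a}$ is false; in fact $d^{1-a}\leq 1 <2$). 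Therefore $\theta\in[0,\pi/6]$.

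Finally, on $[0,\pi/2]$ the elementary inequality $\theta\leq \tfrac{\pi}{2}\sin\theta$ holds (as $\sin x/x$ is decreasing with $\sin(\pi/2)/(\pi/2)=2/\pi$), so combining with the previous estimate yields
\[
\theta\;\leq\;\tfrac{\pi}{2}\sin\theta\;\leq\;\tfrac{\pi}{2}\cdot\tfrac12 d^{1-a}\;=\;\tfrac{\pi}{4}\,d^{1-a},
\]
which is the desired bound. The only conceptual obstacle is the disambiguation step above (ruling out $\theta$ near $\pi$), and even that is handled by the fact that the hypothesis $|y-x_0|>2d^{a}\geq 2d$ forces $y$ to be far from both $x_0$ and $x_1$ compared to their separation, so $\theta$ cannot be obtuse. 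Everything else is a one-line application of the law of sines and the standard linearization $\sin\theta\approx\theta$ for small $\theta$.
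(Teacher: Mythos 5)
Your proof is correct, and it takes a genuinely different route from the paper. The paper works with the two circles of radius $d^a$ through $x_0$ and $x_1$, invokes the inscribed angle theorem to compare $\theta$ with the angle $\theta'$ subtended from the major arc, reads off $\sin\theta' = \tfrac12 d^{1-a}$ from the isoceles triangle $x_0 O x_1$, and notes that the central angle is automatically less than $\tfrac{\pi}{2}$, so Jordan's inequality applies without any case analysis. You instead get $\sin\theta < \tfrac12 d^{1-a}$ in one line from the law of sines, which is cleaner, but at the cost of having to rule out the obtuse branch by hand via the law of cosines before Jordan's inequality can be used. The two arguments converge on the same estimate $\sin\theta < \tfrac12 d^{1-a}$ and the same use of $\theta \le \tfrac{\pi}{2}\sin\theta$ on $[0,\pi/2]$; the paper's circle construction is essentially a geometric packaging of the fact that an exterior observer subtends a smaller angle than an inscribed one, which buys it the monotonicity for free, while your law-of-sines version is more elementary and more transparent but needs the extra disambiguation step. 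Both are valid; yours is arguably easier to verify line by line.
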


\begin{proof}
We only need to work on the geometry on the plane passing $x_0$, $x_1$, and $y$ as showing in the Figure \ref{figsmall}.
\begin{figure}\label{figsmall}
\centering
\includegraphics[scale=.6]{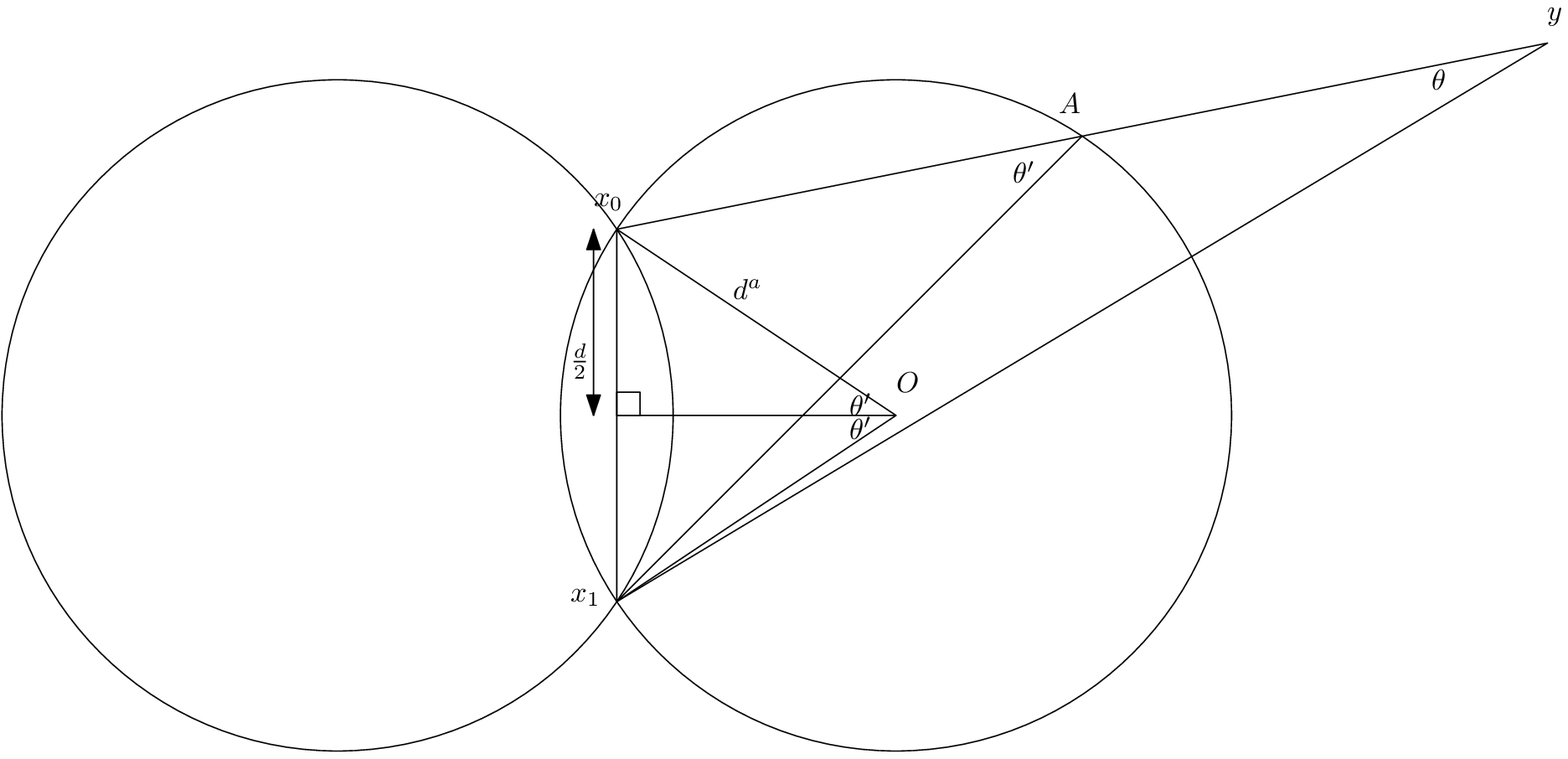}
\caption{}
\end{figure}
On this plane, we draw the two circles passing $x_0$ and $x_1$ with radius $d^a$. We observe  that $y$ is outside of the union of two larger arcs with $x_0$ and $x_1$ as end points. Let $A$ be the intersection point between $\overline{x_0y}$  and one of the larger arcs. We denote the center of the circle containing this arc as $O$. We have 
\begin{equation}
\theta=\angle{x_0yx_1}<\angle{x_0Ax_1}=:\theta'.\end{equation}
On the other hand, \begin{equation}
2\angle{x_0Ax_1}=\angle{x_0Ox_1}<\frac{\pi}2.   \end{equation}
We observe
\begin{equation}
\sin(\theta')d^a=\frac{d}2.
\end{equation} 

Applying Jordan's inequality, we have
\begin{equation}
\theta<\theta'\leq\frac{\pi}2\sin\theta'=\frac{\pi}4d^{1-a}.    
\end{equation}
\end{proof}

Now, we are ready to prove Lemma \ref{MixLemma}.
\begin{proof}[Proof of Lemma \ref{MixLemma}]
We add and subtract  to obtain 
 \begin{equation}
\begin{split}
&\left|G(x_0,\zeta)-G(x_1,\zeta)\right|\leq
\bigg|\int_0^\infty\int_{\Omega} k(\zeta,\frac{(x_0-y)\rho}{|x_0-y|})\frac{\rho e^{-\nu(\rho)\frac{|x_0-y|}{\rho}}}{|x_0-y|^2}\\&\quad\cdot\Big[K(f)(y, \frac{(x_0-y)\rho}{|x_0-y|})-K(f)(y, \frac{(x_1-y)\rho}{|x_1-y|})\Big]dyd\rho\bigg|\\&+\bigg|\int_0^\infty\int_{\Omega} K(f)(y, \frac{(x_1-y)\rho}{|x_1-y|})\\&\quad\cdot\Big[k(\zeta,\frac{(x_0-y)\rho}{|x_0-y|})\frac{\rho e^{-\nu(\rho)\frac{|x_0-y|}{\rho}}}{|x_0-y|^2}-k(\zeta,\frac{(x_1-y)\rho}{|x_1-y|})\frac{\rho e^{-\nu(\rho)\frac{|x_1-y|}{\rho}}}{|x_1-y|^2}\Big]dyd\rho\bigg|\\&=:G_K+G_O.
\end{split}
\end{equation}
We first deal with $G_K$.  We use $B(x,r)$ to  denote the open ball centered at $x$ with radius $r$. 
We break the domain of integration into two,  $\Omega_1:= \Omega \setminus B(x_0, 2d^a)$ and $\Omega_2:=\Omega\cap B(x_0,2d^a)$  for $0<a<1$ to be determinate later and name the corresponding integrals as $A_1$ and $A_2$ respectively.  For $A_1$ , we will use the smoothness of $K(f)$ in velocity, while, for $A_2$, we use the smallness of domain. Notice that, from Proposition \ref{holdervelocity},  $K(f)$ is Lipschitz continuous in velocity. 
\begin{equation}
|K(f)(y,\zeta_1)-K(f)(y,\zeta_2)|\leq C\Vert f\Vert_{L^\infty_{x,\zeta}}|\zeta_1-\zeta_2|.
\end{equation}
 Together with Proposition \ref{anglediff}, we have
 \begin{equation}\begin{split}
 &\left|K(f)(y, \frac{(x_0-y)\rho}{|x_0-y|})-K(f)(y, \frac{(x_1-y)\rho}{|x_1-y|})\right|\\&\leq C \Vert f\Vert_{L^\infty_{x,\zeta}}\left|\frac{(x_0-y)\rho}{|x_0-y|}-\frac{(x_1-y)\rho}{|x_1-y|}\right|\leq  C \Vert f\Vert_{L^\infty_{x,\zeta}}\rho \theta\\&\leq   C \Vert f\Vert_{L^\infty_{x,\zeta}}\rho d^{1-a}.
 \end{split}\end{equation}
 Therefore,
 
\begin{equation}\begin{split}&|A_1|\leq C\Vert f\Vert_{L^\infty_{x,\zeta}}\int_0^\infty\int_{\Omega_1} \big|k(\zeta,\frac{(x_0-y)\rho}{|x_0-y|})\big|\frac{\rho e^{-\nu(\rho)\frac{|x_0-y|}{\rho}}}{|x_0-y|^2}d^{(1-a)}\rho dyd\rho\\&
\leq C d^{(1-a)}\Vert f\Vert_{L^\infty_{x,\zeta}}\int_0^\infty\int_0^\pi\int_0^{2\pi}\int_{2d^{a}}^R  \big|k(\zeta,\frac{(x_0-y)\rho}{|x_0-y|})\big|\rho^{2}\sin\theta drd\phi d\theta d\rho\\
&\leq C d^{(1-a)}\Vert f\Vert_{L^\infty_{x,\zeta}}\int_{2d^{a}}^R \int_{\mathbb{R}^3}|k(\zeta,\zeta')|d\zeta'd r\\&\leq C d^{(1-a)}\Vert f\Vert_{L^\infty_{x,\zeta}}R,\end{split}\end{equation} 
where $R$ is the diameter of $\Omega$. Notice that we changed back the coordinates in order to take advantage of the integrability of $k(\zeta,\zeta')$ in the above inequality. 

On the other hand, using \eqref{estimateK},  we have
\begin{equation}\label{BdLinf}
\Vert K(f)(x,\zeta)\Vert_{L^\infty_{x,\zeta}}\leq C\Vert f\Vert_{L^\infty_{x,\zeta}}.
\end{equation}
Therefore,
\begin{equation}\begin{split}&|A_2|\leq C\Vert f \Vert_{L^\infty_{x,\zeta}}\int_0^\infty\int_{B(x_0,2d^a)}\big| k(\zeta,\frac{(x_0-y)\rho}{|x_0-y|})\big|e^{-\nu(\rho)\frac{|x_0-y|}{\rho}}\frac{\rho}{|x_0-y|^2}dyd\rho\\&\leq C\Vert f \Vert_{L^\infty_{x,\zeta}}\int_0^\infty\int_0^\pi\int_0^{2\pi}\int_0^{2d^{a}}\big| k(\zeta,\frac{(x_0-y)\rho}{|x_0-y|})\big|e^{-\nu(\rho)\frac{|x_0-y|}{\rho}}\rho\sin\theta dr d\phi d\theta d\rho
\\&\leq C\Vert f \Vert_{L^\infty_{x,\zeta}}\int_0^{2d^{a}} \int_{\mathbb{R}^3}|k(\zeta,\zeta')|\frac1{|\zeta'|}d\zeta'd r\leq C\Vert f \Vert_{L^\infty_{x,\zeta}} d^a.\end{split}\end{equation}  
To optimize the estimate, we choose $a=
\frac12$, which gives the desired estimate for $G_K$.

Now, we proceed to estimate $G_O$.
We divide the domain of integration into two, $\Omega_3:=\Omega \setminus B(x_0,2d)$ and $\Omega_4:=\Omega\cap B(x_0,2d)$, and name the corresponding integrals as $A_3$ and $A_4$ respectively.

We first deal with $A_3$.   Let $\tilde{x}(u)=x_1+(x_0-x_1)u$.  We have
\begin{equation}\begin{split}
&|A_3|=\\&\left|\int_0^\infty\int_{\Omega_3} \int_0^1\frac{d}{du}[k(\zeta,\frac{(\tilde{x}(u)-y)\rho}{|\tilde{x}(u)-y|})\frac{\rho e^{-\nu(\rho)\frac{|\tilde{x}(u)-y|}{\rho}}}{|\tilde{x}(u)-y|^2}]duK(f)(y, \frac{(x_1-y)\rho}{|x_1-y|})dyd\rho\right|.
\end{split}\end{equation}
By direct calculation and using \eqref{estimateK}, \eqref{gradianK}, \eqref{BdLinf}, together with Proposition \ref{cafdecay}, we obtain
 \begin{equation}
\begin{split}
&|A_3|
\\&\leq C|x_1-x_0|\Vert f\Vert_{L^\infty_{x,\zeta}}\int_0^1\int_0^\infty\int_{\Omega_3} e^{-\frac18\left[|\zeta-\frac{(\tilde{x}(u)-y)\rho}{|\tilde{x}(u)-y|}|^2+(\frac{|\zeta|^2-\rho^2}{|\zeta-\zeta'(\rho,\tilde{x}(u),y)|})^2\right]}\\&\quad \cdot\frac{1}{|\tilde{x}(u)-y|^3}\left[\frac{\rho^2(1+\rho)}{|\zeta-\frac{(\tilde{x}(u)-y)\rho}{|\tilde{x}(u)-y|}|^2}+\frac{\rho}{|\zeta-\frac{(\tilde{x}(u)-y)\rho}{|\tilde{x}(u)-y|}|}\right]dyd\rho du
\\&\leq Cd\Vert f\Vert_{L^\infty_{x,\zeta}}\int_0^1\int_{d}^R\int_0^\pi\int_0^{2\pi} \int_0^\infty\frac{e^{-\frac18\left[|\zeta-\frac{(\tilde{x}(u)-y)\rho}{|\tilde{x}(u)-y|}|^2+(\frac{|\zeta|^2-\rho^2}{|\zeta-\zeta'(\rho,\tilde{x}(u),y)|})^2\right]}}{r}\\&\quad \cdot\left[\frac{\rho^2(1+\rho)}{|\zeta-\frac{(\tilde{x}(u)-y)\rho}{|\tilde{x}(u)-y|}|^2}+\frac{\rho}{|\zeta-\frac{(\tilde{x}(u)-y)\rho}{|\tilde{x}(u)-y|}|}\right]\sin\theta d\rho d\phi d\theta dr du
\\&\leq Cd\Vert f\Vert_{L^\infty_{x,\zeta}}\int_0^1\int_{d}^R\int_{\mathbb{R}^3} e^{-\frac18\left[|\zeta-\zeta'|^2+(\frac{|\zeta|^2-|\zeta'|^2}{|\zeta-\zeta'|})^2\right]}\\&\quad \cdot\left[\frac{1+|\zeta|+|\zeta-\zeta'|}{|\zeta-\zeta'|^2}+\frac{1}{|\zeta-\zeta'||\zeta'|}\right]d\zeta'\frac1rdrdu
\\&\leq Cd(1+|\ln d|)\Vert f\Vert_{L^\infty_{x,\zeta}}.
\end{split}
\end{equation}
Notice that we applied\begin{equation}|\tilde{x}(u)-y|\geq|x_0-y|-|\tilde{x}(u)-x_0|\geq d
\end{equation}
in the above inequality.

On the other hand, notice that  $\Omega_4\subseteq B(x_0, 2d) \subseteq B(x_1,3d)$. We can obtain
\begin{equation}
|A_4|\leq Cd \Vert f\Vert_{L^\infty_{x,\zeta}}.
\end{equation}
Combining all the estimates above, we conclude the lemma.

\end{proof}
In the end of this section, we will discuss the regularity of $G$  in velocity.  Notice that \begin{equation}
\left|\int_0^{\tau_-(x_0,\zeta')}e^{-\nu t}K(f)(x_0-\zeta't,\zeta')dt\right|< C\Vert f\Vert_{L^\infty_{x,\zeta}}.
\end{equation}
Applying  Proposition \ref{holdervelocity}, we can conclude
\begin{prop}\label{Gzetadiff}
\begin{equation}
|G(x_0,\zeta_1)-G(x_0,\zeta_2)|\leq C_5\Vert f\Vert_{L^\infty_{x,\zeta}}|\zeta_1-\zeta_2|,
\end{equation}
where the constant $C_5$ depending only on the potential.
\end{prop}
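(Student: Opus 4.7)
The plan is to observe that in the formula for $G(x_0,\zeta)$, the variable $\zeta$ appears only inside the outer factor $k(\zeta,\cdot)$. Reverting the spherical-coordinate change of variables that produced \eqref{Gformula} (or equivalently unwinding the inner integral in $III$ directly), we can write
\begin{equation*}
G(x_0,\zeta)=\int_{\mathbb{R}^3}k(\zeta,\zeta')\,H(x_0,\zeta')\,d\zeta',
\qquad
H(x_0,\zeta'):=\int_0^{\tau_-(x_0,\zeta')}e^{-\nu(|\zeta'|)t}K(f)(x_0-\zeta't,\zeta')\,dt.
\end{equation*}
In this form the $\zeta$-regularity of $G$ is entirely controlled by the velocity-regularity of the kernel $k(\zeta,\cdot)$, which is precisely what Proposition \ref{holdervelocity} was designed to quantify.

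The hint stated in the excerpt provides a uniform pointwise bound
\begin{equation*}
\|H(x_0,\cdot)\|_{L^\infty_{\zeta'}}\leq C\Vert f\Vert_{L^\infty_{x,\zeta}},
\end{equation*}
which follows from the decomposition $L=-\nu+K$, the $L^\infty$-boundedness of $K$ given by \eqref{BdLinf}, and the lower bound $\nu\geq\nu_0$. Fix $x_0$ and set $g(\zeta'):=H(x_0,\zeta')$; then $g\in L^\infty_{\zeta'}$ with $\Vert g\Vert_{L^\infty_{\zeta'}}\leq C\Vert f\Vert_{L^\infty_{x,\zeta}}$, and by construction $G(x_0,\zeta)=K(g)(\zeta)$.

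Now I apply Proposition \ref{holdervelocity} with $p=\infty$ to the function $g$ to obtain
\begin{equation*}
\Vert\nabla_\zeta G(x_0,\cdot)\Vert_{L^\infty_\zeta}=\Vert\nabla_\zeta K(g)\Vert_{L^\infty_\zeta}\leq C_\infty\Vert g\Vert_{L^\infty_{\zeta'}}\leq C_5\Vert f\Vert_{L^\infty_{x,\zeta}},
\end{equation*}
with $C_5$ depending only on the potential (through $C_\infty$ and the bound in \eqref{BdLinf}). The mean value theorem applied along the segment joining $\zeta_1$ and $\zeta_2$ then yields the Lipschitz estimate
\begin{equation*}
|G(x_0,\zeta_1)-G(x_0,\zeta_2)|\leq\Vert\nabla_\zeta G(x_0,\cdot)\Vert_{L^\infty_\zeta}|\zeta_1-\zeta_2|\leq C_5\Vert f\Vert_{L^\infty_{x,\zeta}}|\zeta_1-\zeta_2|.
\end{equation*}
Since the only possible obstacle is the justification of passing the $\zeta$-gradient through the $d\zeta'$-integral, and this is exactly what Proposition \ref{holdervelocity} provides (via the uniform integrability of $|\nabla_\zeta k(\zeta,\zeta')|$ guaranteed by \eqref{gradianK} and Proposition \ref{cafdecay}), no further technical work is needed.
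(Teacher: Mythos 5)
Your proof is correct and is essentially the paper's own argument, spelled out in more detail: the paper likewise observes that $G(x_0,\zeta)=K(H(x_0,\cdot))(\zeta)$ with $H$ uniformly bounded by $C\Vert f\Vert_{L^\infty_{x,\zeta}}$, and then invokes Proposition \ref{holdervelocity}. Your explicit identification of $G=K(g)$, the $L^\infty$ bound on $g$, and the mean-value step are exactly what the paper's terse sketch leaves to the reader.
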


\section{Properties of a convex domain \label{convexdomain}}
In this section, we will discuss some properties of a convex domain and then apply to the proof of the regularity of $III$. These properties are all based on the same observation, that is,   a line connecting an interior point and a boundary point can not have  too small an angle to the tangent plane in a convex domain if the point is away from the boundary.  The following proposition and its proof  serve as an examples. 
\begin{prop}\label{G12}
Suppose  $\Omega$ is a bounded $C^1$ convex domain in $ \mathbb{R}^3$ and $x, \ y \in\Omega$.  Then, there exists a constant  $C_6$ only depending on $\Omega$ such that\begin{align}
&|p(x,\zeta)-p(y,\zeta)| \leq  C_6(1+d_0^{-1})|x-y|\label{geometry1},\\
&|\tau_-(x,\zeta)-\tau_-(y,\zeta)|\leq  C_6(1+d_0^{-1})\frac{|x-y|}{|\zeta|}\label{geometry2}.
\end{align}
\end{prop}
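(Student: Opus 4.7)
The plan is to reduce both estimates to a Lipschitz bound for $\tau_-(\cdot,\zeta)$ in the spatial variable. Since $p(x,\zeta)=x-\tau_-(x,\zeta)\zeta$, the identity
\begin{equation*}
p(x,\zeta)-p(y,\zeta)=(x-y)-\bigl(\tau_-(x,\zeta)-\tau_-(y,\zeta)\bigr)\zeta
\end{equation*}
together with the triangle inequality shows that \eqref{geometry1} follows from \eqref{geometry2} up to absorbing an additive $|x-y|$ into the constant. So the real work is a pointwise bound on $\nabla_x\tau_-$.

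The key geometric input, the observation stated just before the proposition, is a lower bound on the incidence angle. Let $z\in\Omega$ with $\mathrm{dist}(z,\partial\Omega)\ge d_0$, and set $p=p(z,\zeta)$ with outer unit normal $n(p)$. By convexity, $\Omega$ lies in the supporting half-space $H=\{q:(q-p)\cdot n(p)\le 0\}$; since the inscribed ball $B(z,d_0)\subset\Omega\subset H$, the distance from $z$ to the tangent plane at $p$ is at least $d_0$. On the other hand, this distance equals $|(z-p)\cdot n(p)|=\tau_-(z,\zeta)\,|\zeta\cdot n(p)|$, and $\tau_-(z,\zeta)|\zeta|=|z-p|\le R:=\mathrm{diam}(\Omega)$, so combining,
\begin{equation*}
|\zeta\cdot n(p)|\;\ge\;\frac{d_0\,|\zeta|}{R}.
\end{equation*}

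In particular the trajectory meets $\partial\Omega$ transversally, and the implicit function theorem applied to a $C^1$ local defining function of $\partial\Omega$ gives that $\tau_-(\cdot,\zeta)$ is $C^1$ near $z$ with
\begin{equation*}
\nabla_x\tau_-(x,\zeta)=\frac{n(p(x,\zeta))}{\zeta\cdot n(p(x,\zeta))},\qquad |\nabla_x\tau_-(x,\zeta)|\le \frac{R}{|\zeta|\,d_0}.
\end{equation*}
Convexity of $\Omega$ keeps the segment $[x,y]$ inside $\Omega$, and the distance function $z\mapsto\mathrm{dist}(z,\partial\Omega)$ is concave there (a standard consequence of convexity), so the hypothesis at the endpoints forces $\mathrm{dist}(z,\partial\Omega)\ge d_0$ along the whole segment and the gradient bound applies uniformly. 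The mean value theorem then gives
\begin{equation*}
|\tau_-(x,\zeta)-\tau_-(y,\zeta)|\;\le\;\frac{R}{|\zeta|\,d_0}|x-y|\;\le\;R\,(1+d_0^{-1})\frac{|x-y|}{|\zeta|},
\end{equation*}
which is \eqref{geometry2}, and feeding this back into the identity for $p(x,\zeta)-p(y,\zeta)$ yields \eqref{geometry1}.

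The main obstacle is the geometric lemma itself. The $C^1$ hypothesis on $\partial\Omega$ rules out any curvature-based argument, but the supporting hyperplane reduction above uses only convexity and the existence of an inscribed ball of radius $d_0$. The resulting factor $d_0^{-1}$ corresponds to the genuine degeneration of transversality as $z$ approaches $\partial\Omega$ and appears to be unavoidable at this level of generality.
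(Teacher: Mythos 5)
Your proof is correct, and it reaches the same estimates with the same $d_0^{-1}$ blow-up, but by a genuinely different route from the paper. The paper works entirely with synthetic plane geometry: it restricts to the plane through $x$, $y$ containing $\zeta$, introduces auxiliary points (the tangent plane at $X_0=p(x,\zeta)$, its intersections $B$, $D$ with various lines, translates $E$, $F$ of $X_0$), shows $\sin\theta'\ge d_0/R$ for the incidence angle at $X_0$ via the inscribed-ball picture, and then extracts both bounds from triangle inequalities in $\triangle EDX_0$ and $\triangle EY_0X_0$. You instead package the same transversality estimate abstractly through the supporting half-space $H$ at $p$ and the inclusion $B(z,d_0)\subset\Omega\subset H$, obtaining $|\zeta\cdot n(p)|\ge d_0|\zeta|/R$, and then differentiate $\tau_-$ via the implicit function theorem (valid here since $\partial\Omega$ is $C^1$ and the trajectory is transversal), giving $|\nabla_x\tau_-|\le R/(d_0|\zeta|)$; the concavity of $\mathrm{dist}(\cdot,\partial\Omega)$ on a convex body guarantees the gradient bound holds along the full segment $[x,y]$, and the mean value theorem closes the argument. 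The key geometric insight — that the inscribed ball forces a lower bound on the incidence angle proportional to $d_0/R$ — is identical in both proofs, but your calculus-based formulation is arguably cleaner and immediately explains the structure of the constant (a single factor $d_0^{-1}$ from the transversality degeneration), whereas the paper's synthetic version requires tracking several auxiliary points and handling the degenerate case $L_\perp \parallel$ tangent plane separately. One small bonus of the paper's version is that it avoids invoking the implicit function theorem and the concavity of the distance function, making it self-contained at the level of elementary plane geometry.
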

\begin{proof} We will work on the  geometry of the plane passing two points $x$ and $y$ and containing the vector $\zeta$  as showing in Figure \ref{figconvex}.
\begin{figure}
\centering
\includegraphics[scale=.6]{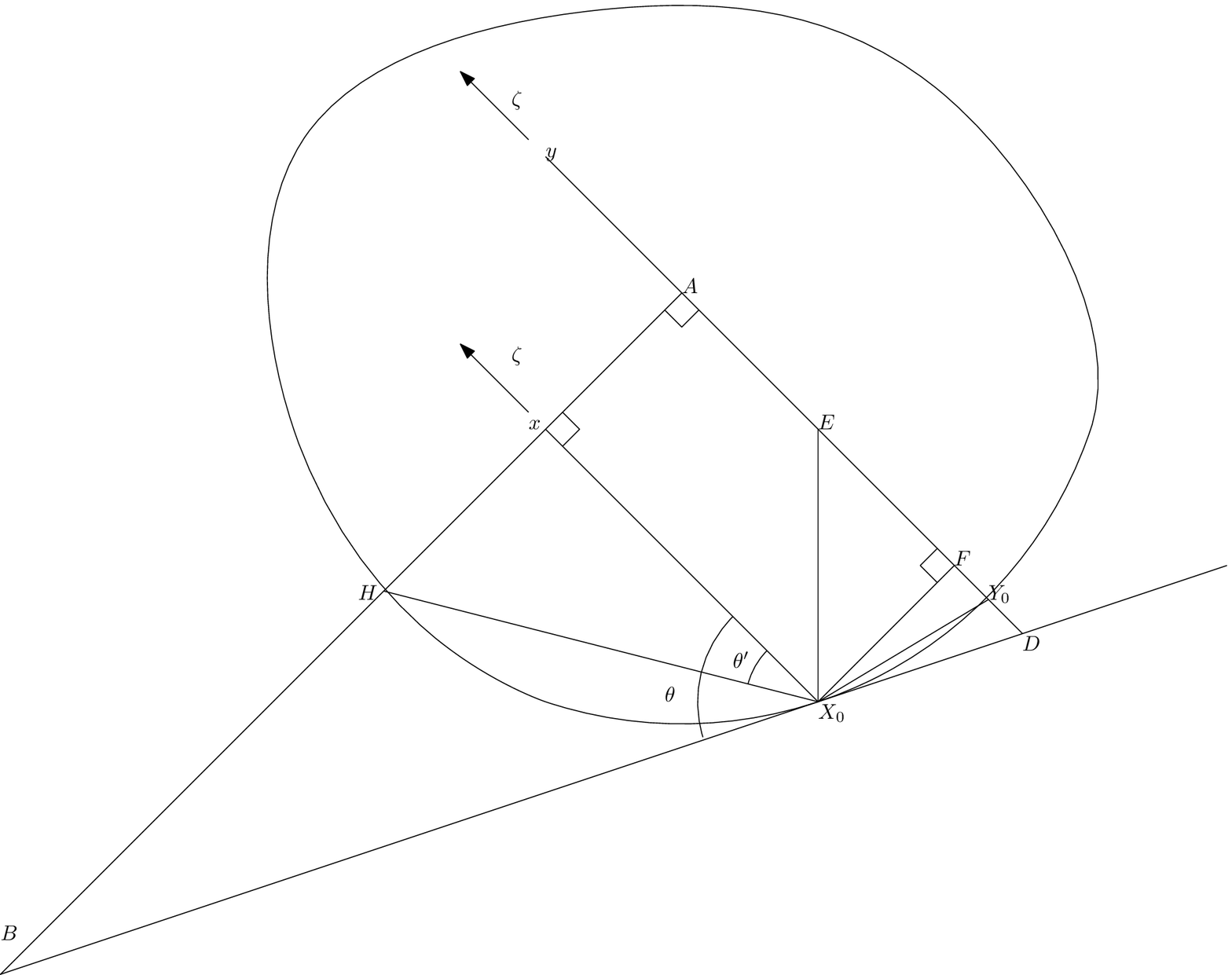}
\caption{}\label{figconvex}
\end{figure}Without lost of generlity, we may assume  $\tau_-(y,\zeta)\geq\tau_-(x,\zeta)$.    We note $X_0=p(x,\zeta)$ and $Y_0=p(y,\zeta)$. If $y-x\parallel \zeta$, then $X_0=Y_0$ and $|\tau_-(x,\zeta)-\tau_-(y,\zeta)|=\frac{|x-y|}{|\zeta|}$ and the lemma holds. Otherwise,   let $\zeta_\perp$ be the projection of the vector $x-y$ on the subspace perpendicular to $\zeta$. The line passing $x$ with direction $\zeta_\perp$ is denoted by $L_\perp$, which intersects the line $\overleftrightarrow{yY_0}$ at a point $A$.   Suppose   $L_\perp$ intersects the tangent plane of $\Omega$ passing $X_0$ at  a point $B$. Then, the line segment $\overline{xB}$ intersects $\partial\Omega$ at a point $H$, because of the convexity.    $\overleftrightarrow{BX_0}$ and $\overleftrightarrow{yY_0}$ intersect at $D$.  Let $E=X_0+(y-x)$ and $F=X_0+(A-x)$.  Define $\theta: =\angle ADB=\angle xX_0B$.  Notice that $0<\theta\leq \frac{\pi}{2}$ because  $\overleftrightarrow{Ax}\perp \overleftrightarrow{AD}$.  We observe $0<\theta':=\angle xX_0H<\theta$ because of the convexity. Notice that \begin{equation}
\sin\theta'=\frac{|\overline{xH}|}{|\overline{X_0H}|}\geq\frac{d_0}{R}.
\end{equation}
Therefore, \begin{equation}|\overline{X_0D}|=\frac{|\overline{X_0F}|}{\sin\theta}\leq \frac{R}{d_0}|\overline{X_0F}|\leq \frac{R}{d_0}|x-y|.
\end{equation}
Notice that $|\zeta|\cdot|\tau_-(x,\zeta)-\tau_-(y,\zeta)|=|\overline{EY_0}|<|\overline{ED}|$. Then, applying triangular inequality for  $ \bigtriangleup  EDX_0$, we can conclude \eqref{geometry2}. Again, using triangular inequality for  $ \bigtriangleup  EY_0X_0$, we obtain  \eqref{geometry1}. Therefore, we proved the proposition for the case $L_\perp$  intersects with the tangent plane passing $x$.  For the other case, i.e., if $L_\perp$ is parallel to the tangent plane passing $X_0$, we can easily see that the proposition holds.
\end{proof}
We use  $d(z, \partial \Omega)$ to denote the distance from $z$ to $\partial \Omega$.
By a similar argument, we  have the following Proposition to be used  the next section
\begin{prop}\label{distanceXzest}
With the same assumption in  Proposition \ref{G12}. Suppose $z\in \overline{xX_0}$. Then, 
\begin{equation}
|\overline{zX}|\leq \frac{R}{d_0}d(z, \partial \Omega).\end{equation}
\end{prop}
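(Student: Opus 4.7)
The plan is a short homothety/convexity argument. Parametrize the chord as $z = tx + (1-t)X_0$ with $t \in [0,1]$, where I write $X_0 := p(x,\zeta)$. Then immediately
\[
|\overline{zX_0}| \;=\; t\,|\overline{xX_0}| \;\leq\; tR,
\]
since $R$ denotes the diameter of $\Omega$.

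Next I will lower-bound $d(z,\partial\Omega)$ by showing that the open ball $B(z,td_0)$ sits inside $\Omega$. Because $d(x,\partial\Omega) \geq d_0$, the ball $B(x,d_0)$ lies in $\Omega$. For any displacement $v$ with $|v| < td_0$, rewrite
\[
z+v \;=\; t\bigl(x + v/t\bigr) + (1-t)X_0.
\]
Here $x + v/t \in B(x,d_0) \subseteq \Omega$ and $X_0 \in \overline{\Omega}$, so convexity of $\Omega$ forces $z+v \in \Omega$. Hence $d(z,\partial\Omega) \geq td_0$.

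Combining the two estimates gives
\[
|\overline{zX_0}| \;\leq\; tR \;=\; \frac{R}{d_0}(td_0) \;\leq\; \frac{R}{d_0}\,d(z,\partial\Omega),
\]
which is the claim. There is no serious obstacle: the only idea needed is to view $z$ as a convex combination of the deep interior point $x$ and the boundary point $X_0$, so that a homothetically scaled-down copy of the interior ball around $x$ automatically fits around $z$. This reflects the same geometric content as Proposition \ref{G12}: in a $C^1$ convex domain containing an interior ball of radius $d_0$, the ratio $R/d_0$ controls how shallowly any chord $\overline{xX_0}$ can meet $\partial\Omega$, and so governs how fast $d(z,\partial\Omega)$ can decay as $z$ slides toward $X_0$.
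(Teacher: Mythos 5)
Your proof is correct. Two minor remarks before the comparison: the paper's displayed inequality writes $|\overline{zX}|$, which is evidently a typo for $|\overline{zX_0}|$, exactly the quantity you bound; and at the endpoint $t=0$ (i.e.\ $z=X_0$) your expression $x+v/t$ is undefined, but there both sides of the inequality vanish, so you may assume $t>0$ without loss.

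Your route is genuinely different from what the paper has in mind. The paper gives no proof at all for this proposition, only the remark ``by a similar argument,'' pointing back to the plane--geometry proof of Proposition~\ref{G12}: there one passes to a two-dimensional section, introduces an auxiliary boundary point $H$ on the perpendicular through $x$, and extracts the inequality $\sin\theta'\geq d_0/R$ for the angle the chord $\overline{xX_0}$ makes at $X_0$. Following that template one would bound $d(z,\partial\Omega)$ from below by a trigonometric estimate involving this angle and a supporting (tangent) plane. Your argument instead avoids any planar reduction, any auxiliary points, and any trigonometry: you write $z=tx+(1-t)X_0$ and observe that the cone $\{\,ta+(1-t)X_0 : a\in B(x,d_0)\,\}$ lies in $\Omega$ by convexity (using $B(x,d_0)\subseteq\Omega$, $X_0\in\overline{\Omega}$, and the standard fact that a convex combination of an interior point and a closure point with positive interior weight is interior), which gives $B(z,td_0)\subseteq\Omega$ and hence $d(z,\partial\Omega)\geq t d_0$; combined with $|\overline{zX_0}|=t|\overline{xX_0}|\leq tR$ this is the claim. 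This is shorter, manifestly dimension-independent, and makes no use of the $C^1$ hypothesis, whereas the tangent-plane picture implicit in the paper's ``similar argument'' leans, at least notationally, on boundary smoothness. Both proofs ultimately encode the same geometric fact---a chord from a point with inradius $d_0$ to the boundary of a convex set of diameter $R$ cannot graze the boundary at an angle with sine below $d_0/R$---but yours isolates it via a clean homothety rather than an explicit angle computation.
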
 
For two trajectories passing the same point, we have the following proposition.
\begin{prop}\label{Geotheta}
Suppose $\Omega$ is a bounded $C^1$ convex domain and  $\zeta_1, \zeta_2 \in \mathbb{R}^3$ and $x \in \Omega$. Let $P_1=p(x,\zeta_1)$ and $P_2=p(x,\zeta_2)$. Let $d_0$ be the distance between $x$ and $\partial \Omega$ and $\theta$ be the angle between $\zeta_1$ and $\zeta_2$.  Then, there exists a constant $C_7$ depending only on $\Omega$ such that
\begin{align}
&|P_1-P_2|\leq C_7(1+d_0^{-1})\theta,
\\&\big||\overline{xP_1}|-|\overline{xP_2}|\big|\leq C_7(1+d_0^{-1})\theta.
\end{align}\end{prop}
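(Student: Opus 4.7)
The plan is to reduce Proposition \ref{Geotheta} to Proposition \ref{G12} by constructing an auxiliary interior point $y$ from which the velocity $\omega_1$ also reaches the boundary at $P_2$. Since both $p(x,\zeta)$ and $|p(x,\zeta) - x|$ depend only on the direction of $\zeta$, I may normalize $\omega_i := \zeta_i/|\zeta_i|$ and set $\tau_i := \tau_-(x,\omega_i) = |\overline{xP_i}|$, so that $P_i = x - \tau_i\omega_i$; write $R$ for the diameter of $\Omega$.

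The central choice is $y := P_2 + \tau_2 \omega_1 = x + \tau_2(\omega_1 - \omega_2)$, for which $|x - y| = \tau_2|\omega_1 - \omega_2| = 2\tau_2\sin(\theta/2) \leq R\theta$. Under the small-angle hypothesis $\theta \leq d_0/(2R)$, this forces $|x-y|\leq d_0/2$, so that $y \in B(x, d_0/2)\subset \Omega$ and $d(y,\partial\Omega) \geq d_0/2$. Convexity of $\bar\Omega$ places the segment $[y,P_2]$ inside $\bar\Omega$; since $y$ is interior and $P_2 \in \partial\Omega$, the standard convex-analysis fact that $[y,P_2)\subset \Omega$, combined with the identity $y - \tau_2\omega_1 = P_2$, identifies $\tau_-(y,\omega_1) = \tau_2$ and $p(y,\omega_1) = P_2$.

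Proposition \ref{G12} applied to the pair $(x,y)$ with common velocity $\omega_1$ then gives
\begin{equation*}
|P_1 - P_2| \leq 2C_6(1+d_0^{-1})|x-y| \leq 2C_6 R(1+d_0^{-1})\theta,
\end{equation*}
and similarly $|\tau_1 - \tau_2|\leq 2C_6 R(1+d_0^{-1})\theta$, which settles the small-angle case. For $\theta > d_0/(2R)$, the trivial bounds $|P_1 - P_2|,\,|\tau_1 - \tau_2|\leq R$ are already absorbed via $R \leq (2R^2/d_0)\theta \leq 2R^2(1+d_0^{-1})\theta$, so the two regimes combine into a single constant $C_7$ depending only on $\Omega$.

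The main obstacle I anticipate is the careful use of convexity to justify simultaneously that $y$ is interior and that $p(y,\omega_1) = P_2$, making the reduction to Proposition \ref{G12} legitimate. The split into small and large $\theta$ is routine but must be arranged uniformly so that a single constant $C_7$ captures both regimes without weakening the $(1+d_0^{-1})$ dependence.
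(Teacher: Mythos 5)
Your proof is correct, but it takes a genuinely different route from the paper's. The paper argues directly in the plane through $x$, $P_1$, $P_2$: it projects $P_1$ onto $\overline{xP_2}$ to get a segment of length $\leq R\theta$, and then uses convexity to bound $\sin\angle P_1P_2x \geq d_0/R$ (by comparing with a chord through $x$ perpendicular to $\overline{xP_2}$), so that $|\overline{P_1P_2}| \leq (R^2/d_0)\theta$; the length estimate follows from the same triangle. Your argument instead reduces the two-velocity, one-point configuration to the one-velocity, two-point configuration of Proposition \ref{G12}, by constructing the auxiliary point $y = x + \tau_2(\omega_1 - \omega_2)$ on the line $\{P_2 + s\omega_1\}$. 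The construction is sound: in the small-angle regime $\theta \leq d_0/(2R)$ you get $|x-y| \leq R\theta \leq d_0/2$, so $y \in \Omega$ with $d(y,\partial\Omega) \geq d_0/2$, and the standard convexity fact that the open segment from an interior point to a boundary point stays interior gives $p(y,\omega_1) = P_2$, $\tau_-(y,\omega_1) = \tau_2$; applying Proposition \ref{G12} (with effective interior distance $d_0/2$, hence a factor $2$) and treating the large-angle regime by the trivial bound completes the argument. What your approach buys is economy — it reuses the already-established geometric lemma rather than redoing a fresh plane-geometry computation — and it makes explicit that Propositions \ref{G12} and \ref{Geotheta} are really the same estimate seen from two endpoints of a triangle. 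What the paper's direct argument buys is that it is self-contained and avoids the small/large-angle case split and the threading of the constant through a second application of Proposition \ref{G12}. Both yield the same $(1+d_0^{-1})\theta$ bound with constants depending only on $\Omega$.
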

\begin{proof}
When $\theta$ is big, the lemma is true because $\Omega$ is bounded. Thus, we only need to deal with the case $\theta<1$.    We are going to work on the plane geometry of the plane passing $x$, $P_1$, and $P_2$.  Without lost of generality, we can assume $|\overline{xP_2}|\geq|\overline{xP_1}|$ . Let $A'$ be the projection of $P_1$ on $\overline{xP_2}$. Notice that \begin{equation}|\overline{P_1A'}|\leq R\sin\theta\leq R\theta.
\end{equation} On the plane we considered, the line passing $x$ and perpendicular to $\overline{xP_2}$ intersects $\Omega$ at two points. The one at the same side with $P_1$  is denoted by $B'$. Because of the convexity, we  observe\begin{equation}\frac{d_0}{R}\leq \sin \angle B'P_2x\leq \sin\angle P_1P_2x.
\end{equation}
Therefore, 
\begin{equation}
|\overline{P_1P_2}|=\frac{|\overline{P_1A'}|}{\sin\angle P_1P_2x}\leq \frac{R^2}{d_0} \theta.\end{equation}
Also,
\begin{equation}
\big||\overline{xP_1}|-|\overline{xP_2}|\big|\leq |\overline{A'P_2}|\leq |\overline {P_1P_2}|\leq \frac{R^2}{d_0} \theta.
\end{equation}
Hence, we complete the proof.
\end{proof}

Now we are ready to discuss the regularity of $III$.
\begin{prop} Let $\Omega$ be a bounded $C^1$ domain in $\mathbb{R}^3$.  $III$ is defined in \eqref{123def} for $f\in L^\infty_{x,\zeta}$ that solves \eqref{SBE}.  Suppose $x$ and $y $ are interior points of $\Omega$ and $\zeta_1,\zeta_2\in \mathbb{R}^3$. Then, there exists $C_8$ depending only on  $\Omega$ and the potential of the gases, such that \begin{equation}| III(x,\zeta_1)-III(y,\zeta_2)|\leq  C_8(1+d_0^{-1})^3\Vert f\Vert_{L^{\infty}_{x,\zeta}}(|x-y|^2+|\zeta_1-\zeta_2|^2)^{\frac14}, \end{equation} where $d_0$ is the distance of $x$ and $y$ to $\partial\Omega$.

\end{prop}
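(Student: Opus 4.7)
The plan is to apply the triangle inequality through the intermediate point $(x,\zeta_2)$, writing $III(x,\zeta_1) - III(y,\zeta_2) = [III(x,\zeta_1)-III(x,\zeta_2)] + [III(x,\zeta_2)-III(y,\zeta_2)]$, and to bound each piece by a multiple of $|\zeta_1-\zeta_2|^{1/2}$ and $|x-y|^{1/2}$ respectively. The three ingredients I would use are the space H\"{o}lder-$\tfrac12$ estimate of Lemma \ref{MixLemma}, the velocity Lipschitz estimate of Proposition \ref{Gzetadiff}, and the convex geometry estimates of Propositions \ref{G12} and \ref{Geotheta} controlling the variation of $\tau_-$ and $p$. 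Since $|G|\leq C\|f\|_\infty$ (from $|K(f)|\leq C\|f\|_\infty$ and $\nu\geq\nu_0$), we have $|III|\leq C\|f\|_\infty$, so it suffices to treat $|x-y|+|\zeta_1-\zeta_2|\leq 1$; the complementary regime is absorbed by the trivial bound.

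For the spatial variation, under the assumption $\tau_-(x,\zeta_2)\leq\tau_-(y,\zeta_2)$, I would split into a common-range term $\int_0^{\tau_-(x,\zeta_2)} e^{-\nu(|\zeta_2|)s}\bigl[G(x-\zeta_2 s,\zeta_2) - G(y-\zeta_2 s,\zeta_2)\bigr]\,ds$, controlled directly by Lemma \ref{MixLemma} together with $\int_0^\infty e^{-\nu_0 s}\,ds \leq 1/\nu_0$, and a boundary tail over $[\tau_-(x,\zeta_2),\tau_-(y,\zeta_2)]$. For the tail, Proposition \ref{G12} gives $|\tau_-(x,\zeta_2)-\tau_-(y,\zeta_2)|\leq C(1+d_0^{-1})|x-y|/|\zeta_2|$, while the damping satisfies $e^{-\nu_0 s}\leq e^{-\nu_0 d_0/|\zeta_2|}$ since $\tau_-(x,\zeta_2)\geq d_0/|\zeta_2|$. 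The elementary inequality $\sup_{u>0} ue^{-\nu_0 u}<\infty$ absorbs the $1/|\zeta_2|$ singularity into one factor $1/d_0$, producing a total spatial contribution $\leq C(1+d_0^{-1})^2\|f\|_\infty|x-y|^{1/2}$ after using the boundedness of $\Omega$.

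For the velocity variation, with $\tau_i=\tau_-(x,\zeta_i)$ and $\nu_i=\nu(|\zeta_i|)$, the same split applies. On the common interval I would triangulate $e^{-\nu_1 s}G(x-\zeta_1 s,\zeta_1) - e^{-\nu_2 s}G(x-\zeta_2 s,\zeta_2)$ through $e^{-\nu_1 s}G(x-\zeta_1 s,\zeta_2)$ and $e^{-\nu_2 s}G(x-\zeta_1 s,\zeta_2)$: Proposition \ref{Gzetadiff} handles the velocity increment of $G$, Lemma \ref{MixLemma} applied with spatial increment $(|\zeta_1-\zeta_2|s)^{1/2}$ handles the space increment, and $|e^{-\nu_1 s}-e^{-\nu_2 s}|\leq se^{-\nu_0 s}|\nu_1-\nu_2|$ together with the Lipschitz continuity of $\nu(|\zeta|)$ handles the exponential; the convergent moments $\int_0^\infty s^{1/2}e^{-\nu_0 s}\,ds$ and $\int_0^\infty se^{-\nu_0 s}\,ds$ deliver the $|\zeta_1-\zeta_2|^{1/2}$ bound. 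For the velocity tail, Proposition \ref{Geotheta} bounds $\bigl||xp(x,\zeta_1)|-|xp(x,\zeta_2)|\bigr|$ by $C(1+d_0^{-1})\theta$ where $\theta$ is the angle between $\zeta_1,\zeta_2$, and elementary trigonometry yields $\theta\leq C|\zeta_1-\zeta_2|/\sqrt{|\zeta_1||\zeta_2|}$. Decomposing $\tau_1-\tau_2 = (|xp(x,\zeta_1)|-|xp(x,\zeta_2)|)/|\zeta_1| + |xp(x,\zeta_2)|(1/|\zeta_1|-1/|\zeta_2|)$ produces terms carrying up to $1/(|\zeta_1||\zeta_2|)$, which are absorbed by $\sup_{u>0}u^2 e^{-\nu_0 u}<\infty$ at the cost of two factors $1/d_0$; combined with the $(1+d_0^{-1})$ already provided by Proposition \ref{Geotheta}, this yields the $(1+d_0^{-1})^3$ contribution.

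The main obstacle is precisely this small-$|\zeta|$ regime: every geometric estimate for $\tau_-$ and $p$ on a convex domain degenerates like a power of $1/|\zeta|$, and keeping the constants finite requires balancing these singularities against the exponential transport damping $e^{-\nu_0 \tau_-}\leq e^{-\nu_0 d_0/|\zeta|}$, each balance costing one factor $1/d_0$ via the moment identities $\sup_{u>0}u^k e^{-\nu_0 u}<\infty$. Careful bookkeeping shows the spatial tail consumes $(1+d_0^{-1})^2$ and the velocity tail $(1+d_0^{-1})^3$; the latter dominates, yielding the asserted prefactor. Finally, summing the two H\"{o}lder-$\tfrac12$ bounds and using $a^{1/2}+b^{1/2}\leq 2(a+b)^{1/2}$ converts $|x-y|^{1/2}+|\zeta_1-\zeta_2|^{1/2}$ into $(|x-y|^2+|\zeta_1-\zeta_2|^2)^{1/4}$ up to an absolute constant, completing the estimate.
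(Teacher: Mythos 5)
Your two-step decomposition $(x,\zeta_1)\to(x,\zeta_2)\to(y,\zeta_2)$ and the identification of the three ingredients — Lemma \ref{MixLemma} for the spatial $\frac12$-H\"{o}lder increment of $G$, Proposition \ref{Gzetadiff} for the velocity Lipschitz increment, and Propositions \ref{G12}, \ref{Geotheta} for the boundary-time/boundary-point geometry — all agree with the paper's plan, and your treatment of the spatial variation $|III(x,\zeta_2)-III(y,\zeta_2)|$ is essentially identical to the paper's estimate \eqref{IIIx}. The main structural difference is that the paper first introduces the intermediate velocity $\bar{\zeta_1}=\frac{|\zeta_2|}{|\zeta_1|}\zeta_1$ so that the velocity step is split into a \emph{pure rotation} (from $\bar{\zeta_1}$ to $\zeta_2$, same modulus, so $|\bar{\zeta_1}||\zeta_2|=|\zeta_2|^2$ and $p(x,\bar\zeta_1)\neq p(x,\zeta_2)$) and a \emph{pure rescaling} (from $\zeta_1$ to $\bar{\zeta_1}$, same direction, so $p(x,\zeta_1)=p(x,\bar{\zeta_1})$ and only $\tau_-$ changes). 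You compress both effects into one pass; this can in principle be made to work, but it is exactly where the gap lies.

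The gap is in the treatment of the velocity tail. You assert that the factors ``carrying up to $1/(|\zeta_1||\zeta_2|)$ \ldots are absorbed by $\sup_{u>0} u^2e^{-\nu_0 u}<\infty$ at the cost of two factors $1/d_0$.'' This is not valid as stated, because the exponential damping available on the tail interval is $e^{-\nu_0\tau_{\min}}\leq e^{-\nu_0 d_0/\max(|\zeta_1|,|\zeta_2|)}$; the moment bound then yields at most $\max(|\zeta_1|,|\zeta_2|)^{2}/d_0^{2}$ and cannot control the extra $1/\min(|\zeta_1|,|\zeta_2|)$ when the two moduli are disparate — the quantity $\frac{1}{|\zeta_1||\zeta_2|}e^{-\nu_0 d_0/\max(|\zeta_1|,|\zeta_2|)}$ genuinely diverges as $\min(|\zeta_1|,|\zeta_2|)\to 0$ with $\max\sim d_0$ fixed. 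What rescues the estimate is the additional dichotomy the paper performs in $R_v$ (and, after the $\bar\zeta_1$-substitution, what lets $R_m$ get away with the plain $u^2e^{-\nu_0 u}$ bound since there $|\bar{\zeta_1}|=|\zeta_2|$): either $\min(|\zeta_1|,|\zeta_2|)>|\zeta_1-\zeta_2|^{1/2}$, in which case $\frac{|\zeta_1-\zeta_2|}{\min(|\zeta_1|,|\zeta_2|)}\leq|\zeta_1-\zeta_2|^{1/2}$ and only one moment identity is needed; or both moduli are $\lesssim|\zeta_1-\zeta_2|^{1/2}$, in which case one must \emph{not} use the interval-length bound on $\int_{\tau_1}^{\tau_2}e^{-\nu_0 s}\,ds$ at all, but rather the crude bound $\nu_0^{-1}e^{-\nu_0\tau_{\min}}$, and the required smallness comes from $e^{-\nu_0 d_0/|\zeta|}\lesssim |\zeta|/d_0\lesssim|\zeta_1-\zeta_2|^{1/2}/d_0$. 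Without this explicit case split the argument does not close, so the proposal as written has a real gap; the fix is exactly the comparison of $|\zeta_1|,|\zeta_2|$ against $|\zeta_1-\zeta_2|^{1/2}$ appearing in the paper's $R_v$, $\Delta I_r^2$ and related estimates.
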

\begin{proof}
Since $III$ is bounded, the cases for $|x-y|>1$ or $|\zeta_1-\zeta_2|>1$ are trivial. We only consider the case $|x-y|\leq1$ and $|\zeta_1-\zeta_2|\leq1$.
Let  $\bar{\zeta_1}=\frac{|\zeta_2|}{|\zeta_1|}\zeta_1$ and $\zeta\in\mathbb{R}^3$. We break the estimate into three parts:
\begin{align}& \label{III1}| III(x,\zeta)-III(y,\zeta)|\leq  C(1+d_0^{-1})^2\Vert f\Vert_{L^{\infty}_{x,\zeta}}|x_0-x_1|^{\frac12}, \\
&| III(x,\bar{\zeta_1})-III(x,\zeta_2)|\leq  C(1+d_0^{-1})^3\Vert f\Vert_{L^{\infty}_{x,\zeta}}|{\zeta_1}-\zeta_2|^{\frac12},\label{III2}\\&
| III(x,\bar{\zeta_1})-III(x,\zeta_1)|\leq  C(1+d_0^{-1})\Vert f\Vert_{L^{\infty}_{x,\zeta}}|\zeta_1-\zeta_2|^{\frac12}.\label{III3}\end{align} 
 We start with \eqref{III1}.  
 We will assume $\tau_-(y,\zeta)\geq\tau_-(x,\zeta)$ and  use  Proposition \ref{G12} and the notations in its proof.  When  $|x-y|<\frac{d_0}{2},$
   \begin{equation}\label{IIIx}\begin{split}
&| III(x,\zeta)-III(y,\zeta)|\leq\\&\left|\int_0^{\frac{|x-X_0|}{|\zeta|}}e^{-\nu s}\big[G(x-s\zeta,\zeta)-G(y-s\zeta,\zeta)\big]ds\right|+\left|\int_{\tau_-(x,\zeta)}^{\tau_-(y,\zeta)}e^{-\nu s}G(y-s\zeta,\zeta)ds\right|\\&\leq  C\Vert f\Vert_{L^{\infty}_{x,\zeta}}|x-y|^{\frac12}+{C}{(1+d_0^{-1})}\frac{|x-y|}{|\zeta|}e^{-\nu_0\frac{d_0}{2|\zeta|}}\Vert f\Vert_{L^{\infty}_{x,\zeta}}\\&\leq  C(1+d_0^{-1})^2\Vert f\Vert_{L^{\infty}_{x,\zeta}}|x_0-x_1|^{\frac12}.
\end{split}\end{equation}
Notice that we used the fact $\frac1{|\zeta|}e^{-\nu_0\frac{d_0}{2|\zeta|}}\leq \frac{C}{\nu_0d_0}$ and $|x-y|\leq1$ in the last inequality above.
When $|x-y|\geq \frac{d_0}{2|\zeta|},$\begin{equation}\label{Bdistanceest}\begin{split}
|III|&\leq C\Vert f\Vert_{L^{\infty}_{x,\zeta}}\\&\leq C\Vert f\Vert_{L^{\infty}_{x,\zeta}}\frac{|x-y|^{\frac12}}{d_0^\frac12}\\&\leq   C(1+d_0^{-1})^2\Vert f\Vert_{L^{\infty}_{x,\zeta}}|x_0-x_1|^{\frac12}.\end{split}
\end{equation}


As for \eqref{III2}, we may assume $\tau_-(x,\bar{\zeta_1})\leq\tau_-(x,\zeta_2)$ without lost of generality and adopt the notations in the proof of Proposition  \ref{Geotheta}  after replacing $\zeta_1$ by $\bar{\zeta_1}$ in the expression.  Then,

\begin{equation}\begin{split}
&| III(x,\bar{\zeta_1})-III(x,\zeta_2)|\leq\\&\quad\left|\int_0^{\frac{|\overline{P_1x}|}{|\zeta_2|}}e^{-\nu(|\zeta_2|) s}\big[G(x-s\bar{\zeta_1},\bar{\zeta_1})-G(x-s\zeta_2,\zeta_2)\big]ds\right|\\&\quad+\left|\int_{\tau_-(x,\bar{\zeta_1})}^{\tau_-(x,{\zeta_2})}e^{-\nu(|\zeta_2|) s}G(x-s{\zeta_2},\zeta)ds\right|\\ &=:D_d+R_m.
\end{split}\end{equation}

When $|x-y|\geq \frac{d_0}2$, the desired estimate for $R_m$ holds similar to \eqref{Bdistanceest}.
When $|x-y|<\frac{d_0}2$, we use Proposition  \ref{Geotheta} and obtain
\begin{equation}\begin{split}
|R_m|&\leq C\Vert f\Vert_{L^\infty_{x,\zeta}}(1+d_0^{-1})\theta|\zeta_2|^{-1} e^{-\nu_0\frac{d_0}{2|\zeta_2|}}\\&\leq C\Vert f\Vert_{L^\infty_{x,\zeta}}(1+d_0^{-1})|\bar{\zeta_1}-\zeta_2||\zeta_2|^{-2} e^{-\nu_0\frac{d_0}{2|\zeta_2|}}\\& \leq C\Vert f\Vert_{L^\infty_{x,\zeta}}(1+d_0^{-1})^3|\bar{\zeta_1}-\zeta_2|\\& \leq C\Vert f\Vert_{L^\infty_{x,\zeta}}(1+d_0^{-1})^3\left(|\bar{\zeta_1}-\zeta_1|+|\zeta_1-\zeta_2|\right)\\& \leq C\Vert f\Vert_{L^\infty_{x,\zeta}}(1+d_0^{-1})^3|\zeta_1-\zeta_2|.\end{split}\end{equation}For $D_d$, we will subtract and add one term and then apply Lemma \ref{MixLemma} and Proposition \ref{Gzetadiff}:

\begin{equation}\label{DDestimate}\begin{split}
D_d\leq &\left|\int_0^{\frac{|\overline{P_1x}|}{|\zeta_2|}}e^{-\nu(|\zeta_2|) s}\big(G(x-s\bar{\zeta_1},\bar{\zeta_1})-G(x-s\zeta_2,\bar{\zeta_1})\big)ds\right|
\\&+\left|\int_0^{\frac{|\overline{P_1x}|}{|\zeta_2|}}e^{-\nu(|\zeta_2|) s}\big(G(x-s\zeta_2,\bar{\zeta_1})-G(x-s\zeta_2,\zeta_2)\big)ds\right|
\\\leq &C\Vert f\Vert_{L^{\infty}_{x,\zeta}}\left|\int_0^{\frac{|\overline{P_1x}|}{|\zeta_2|}}e^{-\nu(|\zeta_2| s}\big(({|\bar{\zeta_1}-\zeta_2|}s)^{\frac12}+|\bar{\zeta_1}-\zeta_2|\big)ds\right| \\ \leq &C\Vert f\Vert_{L^{\infty}_{x,\zeta}}|{\zeta_1}-\zeta_2|^{\frac12}.\end{split}\end{equation}Now, we proceed to  proving \eqref{III3}.  we assume $|\bar{\zeta_1}|
\leq |\zeta_1|$. We write\begin{equation}
\begin{split}
&|III(x,\bar{\zeta_1})-III(x,{\zeta_1})|\leq\\&\left|\int_{0}^{\tau_-({x,\zeta_1})} e^{-\nu(|\zeta_2|)s}G(x-s\bar{\zeta}_1,\bar{\zeta}_1)- e^{-\nu(|\zeta_1|)s}G(x-s\zeta_1,\zeta_1)ds\right|\\&+\left|\int_{\tau_-({x,\zeta_1})} ^{\tau_-({x,\bar{\zeta}_1})}e^{-\nu(|\zeta_2|)s}G(x-s\bar{\zeta}_1,\bar{\zeta}_1)ds\right|\\&=:D_v+R_v.\end{split}
\end{equation} 
We discuss $R_v$ in two cases. When $|\zeta_1|>2|\zeta_1-\zeta_2|^{\frac12}$, we notice that $|\zeta_2|>|\zeta_1-\zeta_2|^{\frac12}$. Then,
\begin{equation}\begin{split}
R_v&\leq C\Vert f \Vert_{L^\infty_{x,\zeta}}\frac{|\zeta_1|-|\zeta_2|}{|\zeta_1||\zeta_2|}e^{-\frac{\nu_0d_0}{|\zeta_1|}}\\&\leq Cd_0^{-1}\Vert f \Vert_{L^\infty_{x,\zeta}}\frac{|\zeta_1|-|\zeta_2|}{|\zeta_2|}\\&\leq  Cd_0^{-1}\Vert f \Vert_{L^\infty_{x,\zeta}}{|\zeta_1-\zeta_2|^\frac12}.
\end{split}\end{equation} 
When $|\zeta_1|\leq  2|\zeta_1-\zeta_2|^\frac12$, $|\zeta_2|\leq  2|\zeta_1-\zeta_2|^\frac12$ because we assume $|\zeta_2|\leq|\zeta_1|$.
Therefore, \begin{equation}\begin{split}
R_v&\leq C\Vert f \Vert_{L^\infty_{x,\zeta}}\int_{\tau_-({x,\zeta_1})} ^{\tau_-({x,\bar{\zeta}_1})}e^{-\nu_0s}ds\\&\leq C\Vert f \Vert_{L^\infty_{x,\zeta}}\left(e^{-\frac{\nu_0 d_0}{|\zeta_1|}}+e^{-\frac{\nu_0 d_0}{|\zeta_2|}}\right)\\&\leq C\Vert f \Vert_{L^\infty_{x,\zeta}}\left(\frac{|\zeta_1|}{\nu_0d_0}+\frac{|\zeta_2|}{\nu_0d_0}\right)\\&\leq  Cd_0^{-1}\Vert f \Vert_{L^\infty_{x,\zeta}}{|\zeta_1-\zeta_2|^\frac12}.
\end{split}\end{equation} 
As for $D_v$,
\begin{equation}\begin{split}
|D_v|&\leq \int_0^{\frac{|\overline{xP_1}|}{|\zeta_1|}}\left|e^{-\nu(|\zeta_2|) s}-e^{-\nu(|\zeta_1|) s}\right||G(x-s\bar{\zeta_1},\bar{\zeta_1})|ds\\&\quad+\int_0^{\frac{|\overline{xP_1}|}{|\zeta_1|}}e^{-\nu(|\zeta_1|) s}\left|G(x-s\bar{\zeta_1},\bar{\zeta_1})-G(x-s{\zeta_1},\bar{\zeta_1})\right|ds
\\&\quad+\int_0^{\frac{|\overline{xP_1}|}{|\zeta_1|}}e^{-\nu(|\zeta_1|) s}\left|G(x-s{\zeta_1},\bar{\zeta_1})-G(x-s{\zeta_1}, {\zeta_1})\right|ds\\&=:D_v^1+D_v^2+D_v^3.
\end{split}\end{equation}
We can deal with $D_v^2$ and $D_v^3$ similar to \eqref{DDestimate}.
To deal with $D_v^1$, we need to discuss the derivative of $\nu$:
\begin{equation}\label{nuderivative}\begin{split}
|\bigtriangledown\nu_{\gamma}(|\zeta|)|&=\left|\beta_0\int_{\mathbb{R}^3}e^{-|\eta|^2}|\eta-\zeta|^{\gamma-1}\frac{\eta-\zeta}{|\eta-\zeta|}d\eta\right|\\&\leq C(1+|\zeta|)^{\gamma-1}.\end{split}\end{equation} 

Applying the Mean Value Theorem, there exists $z$ between $|\zeta_1|$ and $|\zeta_2|$ such that

\begin{equation}\begin{split}
D_v^1&\leq C \int_0^{\frac{|\overline{xP_1}|}{|\zeta_1|}}\left||\zeta_2|-|\zeta_1|\right|e^{-\nu(z) s} (1+z)^{\gamma-1}|G(x-s\bar{\zeta_1},\bar{\zeta_1})|ds\\&\leq C\Vert f \Vert_{L^\infty_{x,\zeta}}{|\zeta_1-\zeta_2|}.
\end{split}\end{equation}Therefore, we prove the regularity of the $III$.
\end{proof}

\section{Preservation of the regularity from the boundary\label{Boundaryreg}}
In this section, we will prove that the damped transport equation preserves the regularity  of the boundary data and so does $II$. 

\begin{prop}\label{Iesttheta} 
Let $0<\sigma<\frac12$ and $\Omega$ be a bounded $C^1$ convex domain.
Suppose  $f\in {L^{\infty}_{x,\zeta}}$ satisfies \begin{equation}
|f(X,\zeta)-f(Y, \zeta')|\leq M(|X-Y|^2+|\zeta-\zeta'|^2)^{\frac{\sigma}{2}}
\end{equation}
 for any  $(X,\zeta),\ (Y,\zeta')\in\Gamma_-$. 
Then, there exists a constant $C_9$ depending only on $\Vert f\Vert _{L^{\infty}_{x,\zeta}}$, $M$, $\Omega$, and the potential such that,  $I$ from \eqref{123def} satisfies

\begin{align}\label{Ixestimate}|I(x,\zeta)-I(y,\zeta)|&\leq C_9(1+d_0^{-1})^{2\sigma}|x-y|^{\sigma},\\\label{Ithetaest}
|I(x,\zeta_1)-I(x,\zeta_2)|&\leq C_9(1+d_0^{-1})^2|\zeta_1-\zeta_2|^{\sigma}.
\end{align}As a consequence, 

\begin{equation}
|I(x,\zeta_1)-I(y, \zeta_2)|\leq 2C_9(1+d_0^{-1})^2(|x-y|^2+|\zeta_1-\zeta_2|^2)^{\frac{\sigma}{2}}.
\end{equation}
\end{prop}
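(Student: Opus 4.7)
The plan is to decompose $I(x,\zeta) = f(p(x,\zeta),\zeta)\,e^{-\nu(|\zeta|)\tau_-(x,\zeta)}$ into the boundary-data factor and the damping exponential, estimate the two resulting contributions separately using the elementary inequality $|e^{-a}-e^{-b}| \leq |a-b|e^{-\min(a,b)}$, and then combine. Concretely, each difference splits as
\begin{align*}
&\bigl[f(p(x,\zeta),\zeta) - f(p(y,\zeta'),\zeta')\bigr] e^{-\nu(|\zeta|)\tau_-(x,\zeta)} \\
&\qquad + f(p(y,\zeta'),\zeta')\bigl[e^{-\nu(|\zeta|)\tau_-(x,\zeta)} - e^{-\nu(|\zeta'|)\tau_-(y,\zeta')}\bigr],
\end{align*}
with $(x,\zeta)$ and $(y,\zeta')$ appropriately specialized for each of \eqref{Ixestimate} and \eqref{Ithetaest}.

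For the spatial estimate \eqref{Ixestimate}, the first piece is controlled by the H\"older assumption on $f$ over $\Gamma_-$ combined with \eqref{geometry1}, yielding $MC_6^\sigma(1+d_0^{-1})^\sigma|x-y|^\sigma$. For the second piece, \eqref{geometry2} together with the lower bound $\tau_-(x,\zeta)\geq d_0/|\zeta|$ reduces matters to bounding $(\nu(|\zeta|)/|\zeta|)e^{-\nu(|\zeta|)d_0/|\zeta|}$; the calculus fact $ue^{-ud_0} \leq 1/(ed_0)$ gives $\leq C(1+d_0^{-1})$, so this piece is bounded by $C\|f\|_{L^\infty_{x,\zeta}}(1+d_0^{-1})^2|x-y|$. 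Since it is also trivially bounded by $2\|f\|_{L^\infty_{x,\zeta}}$, interpolation via $\min(A,B)\leq A^{1-\sigma}B^\sigma$ converts it into $C\|f\|_{L^\infty_{x,\zeta}}(1+d_0^{-1})^{2\sigma}|x-y|^\sigma$, completing \eqref{Ixestimate}.

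For the velocity estimate \eqref{Ithetaest}, I discard the trivial case $|\zeta_1-\zeta_2|\geq 1$ and WLOG take $|\zeta_1|\leq|\zeta_2|$, splitting into two regimes. In the \emph{small-velocity regime} $|\zeta_1|\leq|\zeta_1-\zeta_2|^{1/2}$ both speeds are $\leq 2|\zeta_1-\zeta_2|^{1/2}$, so the pointwise bound $|I(x,\zeta_i)|\leq\|f\|_{L^\infty_{x,\zeta}} e^{-\nu_0 d_0/|\zeta_i|}\leq C\|f\|_{L^\infty_{x,\zeta}}(|\zeta_i|/d_0)^{2\sigma}$ (from $e^{-t}\leq C_\alpha t^{-\alpha}$) gives the claim directly. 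In the \emph{comparable-velocity regime} $|\zeta_1|\geq|\zeta_1-\zeta_2|^{1/2}$, the cosine law yields an angle bound $\theta\leq C|\zeta_1-\zeta_2|/|\zeta_1|$ between $\zeta_1$ and $\zeta_2$, so Proposition \ref{Geotheta} bounds both $|p(x,\zeta_1)-p(x,\zeta_2)|$ and the difference of the ray lengths $|\overline{xP_1}|, |\overline{xP_2}|$ by $C(1+d_0^{-1})|\zeta_1-\zeta_2|/|\zeta_1|$. For the boundary-data piece, combining the H\"older assumption with the identity $\sup_{r>0}r^{-\sigma}e^{-\nu_0 d_0/r}\leq Cd_0^{-\sigma}$ absorbs the singular factor $|\zeta_1|^{-\sigma}$ into the damping and produces $C(1+d_0^{-1})^{2\sigma}|\zeta_1-\zeta_2|^\sigma$. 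The exponential-change piece is handled similarly, invoking \eqref{nuderivative} for $\nu(|\zeta_1|)-\nu(|\zeta_2|)$, Proposition \ref{Geotheta} together with $\tau_-(x,\zeta)|\zeta|=|\overline{xp(x,\zeta)}|$ for $\tau_-(x,\zeta_1)-\tau_-(x,\zeta_2)$, and the same damping identity to absorb any remaining negative powers of $|\zeta_i|$.

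The joint statement then follows from the triangle inequality $|I(x,\zeta_1)-I(y,\zeta_2)|\leq|I(x,\zeta_1)-I(y,\zeta_1)|+|I(y,\zeta_1)-I(y,\zeta_2)|$ together with $a^\sigma+b^\sigma\leq 2(a^2+b^2)^{\sigma/2}$. The main obstacle is the comparable-velocity regime: the factor $|\zeta_1|^{-\sigma}$ produced by the angle-to-speed conversion would naively give only a $|\zeta_1-\zeta_2|^{\sigma/2}$ H\"older rate, and recovering the full $\sigma$-rate requires carefully absorbing this singular factor into the exponential damping $e^{-\nu_0 d_0/|\zeta|}$, which is exactly the source of the extra $d_0^{-\sigma}$ that inflates the constant to $(1+d_0^{-1})^2$ in \eqref{Ithetaest}.
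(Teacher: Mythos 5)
Your proof is correct and follows the same broad strategy as the paper: write $I$ as (boundary data)$\times$(damping exponential), add and subtract a cross term to split each difference into a boundary-data piece and an exponential piece, control the geometry with Propositions \ref{G12} and \ref{Geotheta}, and absorb negative powers of $|\zeta|$ into the factor $e^{-\nu_0 d_0/|\zeta|}$. You differ from the paper in two nontrivial ways. First, for the spatial estimate \eqref{Ixestimate} you bound the exponential difference once and for all by the calculus facts $|e^{-a}-e^{-b}|\le|a-b|e^{-\min(a,b)}$ and $u e^{-u d_0}\le 1/(e d_0)$, and then interpolate with the trivial $L^\infty$ bound via $\min(A,B)\le A^{1-\sigma}B^\sigma$; the paper instead splits into two regimes according to whether $|\zeta|$ is above or below the threshold $(1+d_0^{-1})^{1-2\sigma}|x-y|^{1-\sigma}$ and argues each case directly. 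Your interpolation route is cleaner and avoids the somewhat opaque threshold. Second, for the velocity estimate \eqref{Ithetaest} you compare $\zeta_1$ and $\zeta_2$ directly, whereas the paper first passes through the intermediate $\bar\zeta_1=\frac{|\zeta_2|}{|\zeta_1|}\zeta_1$ to decouple the change of speed from the change of direction. This costs you something in the exponential-change piece: without the $\bar\zeta_1$ normalization, the difference $\nu(|\zeta_1|)\tau_-(x,\zeta_1)-\nu(|\zeta_2|)\tau_-(x,\zeta_2)$ mixes the ray-length, angular, and speed changes, and one must carefully verify that the resulting singular powers of $|\zeta_1|$ (up to $|\zeta_1|^{-2\sigma}$ after using $|\zeta_1-\zeta_2|\le|\zeta_1|^2$) are absorbed into $e^{-\nu_0 d_0/|\zeta_i|}$ at the cost of at most $(1+d_0^{-1})^{1+2\sigma}\le(1+d_0^{-1})^2$. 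You state this only in outline (``handled similarly''), but I checked that with your regime threshold $|\zeta_1|\gtrless|\zeta_1-\zeta_2|^{1/2}$ the bookkeeping closes with exactly the claimed $(1+d_0^{-1})^2$; the paper uses the slightly different threshold $|\zeta_1|\gtrless 2\big||\zeta_1|-|\zeta_2|\big|^{(1-\sigma)/2}$, and both choices work. The final triangle-inequality combination using $a^\sigma+b^\sigma\le 2(a^2+b^2)^{\sigma/2}$ matches the paper.
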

\begin{proof} We can see that  $I$ is bounded and therefore the proposition holds if $|x-y|>1$ or $|\zeta_1-\zeta_2|>1$. We only need to discuss when $|x-y|\leq1$ and $|\zeta_1-\zeta_2|\leq1$.  

We start with \eqref{Ixestimate}. Notice that here we are picky about the growth rate with respect to $d_0$ because that it is needed for the proof of regularity of $II$.
Notice that if $d_0<|x-y|$, then $1<d_0^{-1}|x-y|$ and therefore the inequality holds. Hence, we only need to discuss the case $|x-y|<d_0$.
\begin{equation}\begin{split}
|I(x,\zeta)-I(y,\zeta)|&=\left|f(p(x,\zeta),\zeta)e^{-\nu(|\zeta|)\tau_-(x,\zeta)}-f(p(y,\zeta),\zeta)e^{-\nu(|\zeta|)\tau_-(y,\zeta)}\right|\\&\leq\left|f(p(x,\zeta),\zeta)-f(p(y,\zeta),\zeta)\right|e^{-\nu(|\zeta|)\tau_-(x,\zeta)}\\&\quad+|f(p(y,\zeta),\zeta)|\left|e^{-\nu(|\zeta|)\tau_-(x,\zeta)}-e^{-\nu(|\zeta|)\tau_-(y,\zeta)}\right|\\&=:I_a+I_b.\end{split}\end{equation}
From \eqref{geometry1} and our assumption, we have
\begin{equation}
|I_a|\leq CM(1+d_0^{-1})^{\sigma}|x-y|^{\sigma}.\end{equation}

To deal with $I_b$, we need to discuss the following two cases. 
If $|\zeta|\geq(1+ d_0^{-1})^{1-2\sigma}|x-y|^{1-\sigma}$, then
\begin{equation}\begin{split}
&|e^{-\nu(|\zeta|)\tau_-(x,\zeta)}-e^{-\nu(|\zeta|)\tau_-(y,\zeta)}|\leq C(1+|\zeta|)^\gamma|\tau_-(x,\zeta)-\tau_-(y,\zeta)|\\&\leq C((1+d_0^{-1})(1+|\zeta|)^\gamma\frac{|x-y|}{|\zeta|}\leq C\left\{\begin{array}{ll}(1+d_0^{-1})|x-y|,& |\zeta|>1\\(1+d_0^{-1})^{2\sigma}|x-y|^{\sigma}, & |\zeta|\leq 1\end{array}\right.\\&\leq C(1+d_0^{-1})^{2\sigma}|x-y|^{\sigma}.\end{split}
\end{equation}
Notice that $0\leq\gamma<1$ and $|x-y|<d_0$ were used in the above inequalities.
For the  case $|\zeta|<(1+ d_0^{-1})^{1-2\sigma}|x-y|^{1-\sigma}$,
\begin{equation}\label{zetasmallest1}\begin{split}
&|e^{-\nu\tau_-(x,\zeta)}-e^{-\nu\tau_-(y,\zeta)}|\leq 2e^{-\nu_0\frac{d_0}{|\zeta|}}\\&\leq2\left(\frac{|\zeta|}{\nu_0d_0}\right)^\frac{\sigma}{1-\sigma}\left(\frac{\nu_0d_0}{|\zeta|}\right)^\frac{\sigma}{1-\sigma}e^{-\nu_0\frac{d_0}{|\zeta|}}\\&\leq C\left(\frac{|\zeta|}{\nu_0d_0}\right)^\frac{\sigma}{1-\sigma}\leq C(1+d_0^{-1})^{2\sigma}|x-y|^\sigma.\end{split}
\end{equation}

For \eqref{Ithetaest}, we adopt the notation in the proof of Proposition \ref{Geotheta}. We introduce  $\bar{\zeta_1}=\frac{|\zeta_2|}{|\zeta_1|}\zeta_1$ and write
\begin{equation}\begin{split}
\left|I(x,\zeta_1)-I(x,\zeta_2)\right|&\leq\left|I(x,\zeta_1)-I(x,\bar{\zeta_1})\right|+\left|I(x,\bar{\zeta_1})-I(x,\zeta_2)\right|\\&=:\Delta I_r+\Delta I_\theta.\end{split}
\end{equation} 
For the former part,

\begin{equation}\begin{split}
&\Delta I_r=\left|f(P_1,\zeta_1)e^{-\nu(|\zeta_1|)\frac{|\overline{xP_1}|}{|\zeta_1|}}-f(P_1,\bar{\zeta_1})e^{-\nu(|\zeta_2|)\frac{|\overline{xP_1}|}{|\zeta_2|}}\right|
\\&\leq  \left|[f(P_1,\zeta_1)-f(P_1,\bar{\zeta_1})]e^{-\nu(|\zeta_1|)\frac{|\overline{xP_1}|}{|\zeta_1|}}\right|\\&\quad+\left|f(P_1,\bar{\zeta_1})\left[e^{-\nu(|\zeta_1|)\frac{|\overline{xP_1}|}{|\zeta_1|}}-e^{-\nu(|\zeta_2|)\frac{|\overline{xP_1}|}{|\zeta_2|}}\right]\right|\\&=:\Delta I_r^1+\Delta I_r^2.
\end{split}
\end{equation}
We can obtain the desired bound for $\Delta I_r^1$ directly from the assumption. 
For $\Delta I_r^2$, we need to discuss in two cases.
First, we consider the case $|\zeta_1|\leq 2\big||\zeta_1|-|\zeta_2|\big|^{\frac{1-\sigma}2}$. Notice that $|\zeta_2|\leq 3\big||\zeta_1|-|\zeta_2|\big|^{\frac{1-\sigma}2}$.  Similar to \eqref{zetasmallest1}, we have

 \begin{equation}\label{zeta1small} \begin{split}&|e^{-\nu(|\zeta_1|)\frac{|\overline{xP_1}|}{|\zeta_1|}}|+|e^{-\nu(|\zeta_2|)\frac{|\overline{xP_1}|}{|\zeta_2|}}|\leq e^{-\frac{\nu_0d_0}{|\zeta_1|}}+e^{-\frac{\nu_0d_0}{|\zeta_2|}}\\&
 \leq C\left(\left| \frac{|\zeta_1|}{\nu_0d_0}\right|^{\frac{2\sigma}{1-\sigma}}+\left| \frac{|\zeta_2|}{\nu_0d_0}\right|^{\frac{2\sigma}{1-\sigma}}\right)\\&\leq C(\frac1{\nu_0d_0})^{\frac{2\sigma}{1-\sigma}} \big||\zeta_1|-|\zeta_2|\big|^{\sigma}\\&\leq C(1+d_0^{-1})^2|\zeta_1-\zeta_2|^{\sigma}.\end{split}\end{equation}
  
 When $|\zeta_1|>2\big||\zeta_1|-|\zeta_2||\big|^{\frac{1-\sigma}2}$,  we  have $|\zeta_2|> \big||\zeta_1|-|\zeta_2||\big|^{\frac{1-\sigma}2} $.
 By the Mean Value Theorem, there exists  $\eta'$ between $|\zeta_1|$ and $|\zeta_2|$, therefore $\eta'>\big||\zeta_1|-|\zeta_2||\big|^{\frac{1-\sigma}2}$, such that, 
 \begin{equation}\begin{split}
&| \Delta I_r^2|=   \left|f(P_1,\bar{\zeta_1})|\overline{xP_1}|(|\zeta_1|-|\zeta_2|)\left.e^{-\nu(\eta')\frac{|\overline{xP_1}|}{\eta'}}\frac{d}{d\eta}\big(\frac{\nu(\eta)}{\eta}\big)\right|_{\eta=\eta'}\right|\\& \leq \big||\zeta_1|-|\zeta_2|\big|\left\{\begin{array}{ll}\frac1{{\eta'}^2},& 0 <\eta'<1,
\\ \frac1{{\eta'}^{2-\gamma}},&\eta'\geq1\end{array}\right.
\ 
\\&\leq C \big||\zeta_1|-|\zeta_2|\big|^{\sigma}\leq  C |\zeta_1-\zeta_2|^{\sigma}.
\end{split}
 \end{equation}
 Notice that we used the estimate \eqref{nuderivative} and the fact $|\zeta_1-\zeta_2|\leq 1$.
 
 Now, we   proceed to $\Delta I_\theta$. 
  \begin{equation}\begin{split}
 &|\Delta I_\theta|=\left|f(P_1,\bar{\zeta_1})e^{-\nu(|\zeta_2|)\frac{|\overline{xP_1}|}{|\zeta_2|}}-f(P_2,\zeta_2)e^{-\nu(|\zeta_2|)\frac{|\overline{xP_2}|}{|\zeta_2|}}\right| \\ &
 \leq \big|f(P_1,\bar{\zeta_1})-f(P_2,\zeta_2)\big|e^{-\nu(|\zeta_2|)\frac{|\overline{xP_1}|}{|\zeta_2|}}\\&\quad+\left|f(P_2,\zeta_2)\right|\left|e^{-\nu(|\zeta_2|)\frac{|\overline{xP_1}|}{|\zeta_2|}}-e^{-\nu(|\zeta_2|)\frac{|\overline{xP_2}|}{|\zeta_2|}}\right|\\&=:\Delta I_\theta^1+\Delta I_\theta^2.
 \end{split}\end{equation}
 
 Applying Proposition \ref{Geotheta} and our assumption, we have
 \begin{equation}\begin{split}
 & |\Delta I_\theta^1|\leq C\big[(1+d_0^{-1})^{\sigma}|\theta|^{\sigma}e^{-\nu_0\frac{d_0}{|\zeta_2|}}+|\bar{\zeta_1}-\zeta_2|^{\sigma}\big]\\ &\leq C\big[(1+d_0^{-1})^{\sigma}\left|\frac{|\bar{\zeta_1}-\zeta_2|}{|\zeta_2|}
\right|^{\sigma}e^{-\nu_0\frac{d_0}{|\zeta_2|}}+|\bar{\zeta_1}-\zeta_2|^{\sigma}\big]\\&\leq C(1+d_0^{-1})^{2\sigma}|{\zeta_1}-\zeta_2|^{\sigma}.\end{split}
 \end{equation}
 For $\Delta I_\theta^2$, we again break it into two cases: $|\zeta_2|\leq |\bar{\zeta_1}-\zeta_2|^{\frac{1-\sigma}{2}}$ and $|\zeta_2|> |\bar{\zeta_1}-\zeta_2|^{\frac{1-\sigma}{2}}$.
 We mimic \eqref{zeta1small} and obtain the desired bound for the former case. 
 For the latter one, applying the Mean Value Theorem    and Proposition \ref{Geotheta}, we have
 \begin{equation}\begin{split}&
 |e^{-\nu(|\zeta_2|)\frac{|\overline{xP_1}|}{|\zeta_2|}}-e^{-\nu(|\zeta_2|)\frac{|\overline{xP_2}|}{|\zeta_2|}}|\leq \frac{\nu(|\zeta_2|)}{|\zeta_2|}\big||\overline{xP_1}|-|\overline{xP_2}|\big|\\&\leq C(1+d_0^{-1})\frac{\nu(|\zeta_2|)}{|\zeta_2|}\frac{|\bar{\zeta_1}-\zeta_2|}{|\zeta_2|}\leq  C(1+d_0^{-1})\left\{\begin{array}{ll}|\bar{\zeta_1}-\zeta_2|^{\sigma}, &  |\zeta_2|<1\\|\bar{\zeta_1}-\zeta_2|,&|\zeta_2|\geq1\end{array}\right.\\&\leq C(1+d_0^{-1})|\zeta_1-\zeta_2|^{\sigma}.
 \end{split}\end{equation}
 Notice that the last inequality above holds because we are now discussing the case when $|\zeta_1-\zeta_2|\leq 1$. Hence, we complete the proof.
  
\end{proof}

Now we  only have $II$ left.

\begin{prop}
With the same assumption as Proposition \ref{Iesttheta} above, there exists a constant $C_{10}$ only depending on $\Vert f\Vert_{L^\infty_{x,\zeta}}$,  $M$,  $\sigma$, $\Omega$, and the potential such that, 
\begin{equation}
|II(x,\zeta_1)-II(y,\zeta_2)|\leq C_{10}(1+d_0^{-1})^{3}(|x-y|^2+|\zeta_1-\zeta_2|^2)^{\frac{\sigma}2}.
\end{equation} 
\end{prop}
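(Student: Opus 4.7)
The strategy is to view $II$ as the damped transport of $\widetilde I := KI$, where $I(z,\zeta') = f(p(z,\zeta'),\zeta')\,e^{-\nu(|\zeta'|)\tau_-(z,\zeta')}$, so that
\[
II(x,\zeta) = \int_0^{\tau_-(x,\zeta)} e^{-\nu(|\zeta|)s}\,\widetilde I(x-\zeta s,\zeta)\,ds.
\]
Two regularity facts for $\widetilde I$ drive everything. Spatially, $\|k(\zeta,\cdot)\|_{L^1_{\zeta'}}\leq C$ and Proposition \ref{Iesttheta} give $|\widetilde I(z_1,\zeta)-\widetilde I(z_2,\zeta)|\leq C(1+d_*^{-1})^{2\sigma}|z_1-z_2|^\sigma$, with $d_*=\min_i d(z_i,\partial\Omega)$; in velocity, $\|\nabla_\zeta k\|_{L^\infty_\zeta L^1_{\zeta'}}<\infty$ (cf.\ the discussion after Proposition \ref{holdervelocity}) yields $|\widetilde I(z,\zeta_1)-\widetilde I(z,\zeta_2)|\leq C\|f\|_{L^\infty_{x,\zeta}}|\zeta_1-\zeta_2|$. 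After reducing to $|x-y|,|\zeta_1-\zeta_2|\leq\min(1,d_0/2)$ (the complementary cases follow from $|II|\leq C\|f\|_{L^\infty}$ and $(1+d_0^{-1})^3(|x-y|^2+|\zeta_1-\zeta_2|^2)^{\sigma/2}\geq 1$), I would split
\[
|II(x,\zeta_1)-II(y,\zeta_2)|\leq|II(x,\zeta_1)-II(y,\zeta_1)|+|II(y,\zeta_1)-II(y,\zeta_2)|
\]
and treat the spatial and velocity pieces separately.

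For the spatial part, assume WLOG $\tau_-(y,\zeta)\geq\tau_-(x,\zeta)$ and write
\[
II(x,\zeta)-II(y,\zeta)=\int_0^{\tau_-(x,\zeta)} e^{-\nu s}\bigl[\widetilde I(x-\zeta s,\zeta)-\widetilde I(y-\zeta s,\zeta)\bigr]ds-\int_{\tau_-(x,\zeta)}^{\tau_-(y,\zeta)} e^{-\nu s}\widetilde I(y-\zeta s,\zeta)\,ds.
\]
The tail integral is $O((1+d_0^{-1})|x-y|^\sigma)$ via Proposition \ref{G12} and a case split on $|\zeta|$ vs.\ $(1+d_0^{-1})|x-y|^{1-\sigma}$, identical to the treatment of $I_b$ in the proof of Proposition \ref{Iesttheta}. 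For the principal integral I would split at $s_\star:=d_0/(2|\zeta|)$: on $[0,s_\star]$ convexity gives $d(x-\zeta s,\partial\Omega),d(y-\zeta s,\partial\Omega)\geq d_0/2$, so the spatial Hölder estimate for $\widetilde I$ combined with $\int e^{-\nu_0 s}ds\leq 1/\nu_0$ yields the clean bound $C(1+d_0^{-1})^{2\sigma}|x-y|^\sigma$; on $[s_\star,\tau_-(x,\zeta)]$ Proposition \ref{distanceXzest} gives $d_*(s)\geq (d_0/R)|\zeta|(\tau_-(x,\zeta)-s)$, and using $(1+a)^{2\sigma}\leq 1+a^{2\sigma}$ together with $\int(\tau_--s)^{-2\sigma}ds\leq(R/|\zeta|)^{1-2\sigma}/(1-2\sigma)$ (legal because $\sigma<1/2$) produces a term carrying an $1/|\zeta|$ singularity. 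This singularity is absorbed by $e^{-\nu s}\leq e^{-\nu_0 d_0/(2|\zeta|)}$ via the elementary inequality $xe^{-x}\leq 1/e$, which gives $e^{-\nu_0 d_0/(2|\zeta|)}/|\zeta|\leq C/d_0$ and ultimately the bound $C(1+d_0^{-1})^{2\sigma+1}|x-y|^\sigma$.

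For the velocity difference I would introduce $\bar\zeta_1=(|\zeta_2|/|\zeta_1|)\zeta_1$ and split into an angular piece $II(x,\bar\zeta_1)-II(x,\zeta_2)$ and a radial piece $II(x,\zeta_1)-II(x,\bar\zeta_1)$, mirroring Proposition \ref{Iesttheta}. The angular piece is treated exactly like the spatial step, with the pair $(x-s\bar\zeta_1,x-s\zeta_2)$, of separation $s|\bar\zeta_1-\zeta_2|$, playing the role of $(x-\zeta s,y-\zeta s)$, and with Proposition \ref{Geotheta} supplying $|P_1-P_2|\leq C(1+d_0^{-1})\theta$ and $\bigl||\overline{xP_1}|-|\overline{xP_2}|\bigr|\leq C(1+d_0^{-1})\theta$ for the tail; here an additional $|\zeta_2|^{-1}$ coming from $\theta\leq C|\bar\zeta_1-\zeta_2|/|\zeta_2|$ is again absorbed by the exponential, producing the full $(1+d_0^{-1})^3$. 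The radial piece, whose trajectories share the ray, decomposes into three summands analogous to $D_v^1,D_v^2,D_v^3$ in the proof of Proposition \ref{Iesttheta}: the factor $|e^{-\nu(|\zeta_1|)s}-e^{-\nu(|\bar\zeta_1|)s}|$ is controlled by $|\nabla\nu|\leq C(1+|\zeta|)^{\gamma-1}$ from (\ref{nuderivative}), while the two $\widetilde I$-differences use spatial Hölder regularity and velocity Lipschitzness, with the usual case split on $|\zeta_1|$ vs.\ $\bigl||\zeta_1|-|\zeta_2|\bigr|^{(1-\sigma)/2}$.

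The main obstacle is reconciling two competing singularities: the Hölder constant of $\widetilde I$ blows up like $d_*^{-2\sigma}$ near $\partial\Omega$ while $d_*$ itself can degenerate like $|\zeta|(\tau_-(x,\zeta)-s)$, so a naive estimate produces an uncontrolled $|\zeta|^{-1}$ for small $|\zeta|$. The resolution uses three ingredients simultaneously: time integrability from $\sigma<1/2$; the convexity bound $d(x-\zeta s,\partial\Omega)\geq d_0-|\zeta|s$ valid on $[0,d_0/(2|\zeta|)]$; and the exponential decay $e^{-\nu_0 d_0/(2|\zeta|)}$ on the complementary interval. Together these yield a worst-case power $(1+d_0^{-1})^{2\sigma+1}$ from the spatial principal integral and $(1+d_0^{-1})^3$ from the angular tail, both dominated by $(1+d_0^{-1})^3$ as claimed.
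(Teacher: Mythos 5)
Your proof is correct and delivers the claimed exponent $(1+d_0^{-1})^3$, but it takes a genuinely different route from the paper's. The paper proves each of the three reduced estimates \eqref{II1holder}--\eqref{II3holder} by \emph{dyadically decomposing} $\Omega$ into shells $D_n=M_n\setminus M_{n+1}$ with $M_n=\{d(\cdot,\partial\Omega)\le d_0 2^{-n}\}$, tracking the intersection points $Z_n$ of the trajectory with $\partial M_n$, summing the shell-by-shell contributions $\Delta II^n\lesssim 2^{-(1-2\sigma)n}(1+d_0^{-1})^{1+2\sigma}|x-y|^\sigma$ (convergent since $\sigma<\tfrac12$), and then treating a small terminal segment of length $\sim|x-y|/|\zeta|$ separately via Proposition \ref{G12}. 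You instead isolate $\widetilde I=K(I)$, record its interior space-H\"older bound $(1+d_*^{-1})^{2\sigma}|\cdot|^\sigma$ and velocity-Lipschitz bound once and for all, and then perform a single \emph{continuous} splitting of the time integral at $s_\star=d_0/(2|\zeta|)$: on $[0,s_\star]$ the depth stays $\ge d_0/2$ by convexity, and on $[s_\star,\tau_-]$ you integrate the pointwise H\"older constant against the depth lower bound from Proposition \ref{distanceXzest}, with $\int_{s_\star}^{\tau_-}(\tau_--s)^{-2\sigma}\,ds$ finite playing exactly the role of $\sum_n 2^{-(1-2\sigma)n}<\infty$, and the leftover $|\zeta|^{-1}$ killed by $e^{-\nu_0 d_0/(2|\zeta|)}/|\zeta|\lesssim d_0^{-1}$. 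The two mechanisms are equivalent (both rest on $\sigma<\tfrac12$ and on exponential absorption of the $|\zeta|^{-1}$ blow-up), but your continuous version eliminates the $Z_n$/$S_n$/$N$ bookkeeping and makes the competing singularities and their resolution more transparent; the price is that you lean more heavily on Proposition \ref{distanceXzest} and must check that the H\"older bound is permitted to degenerate all the way to the boundary, which you correctly note is harmless because the integral still converges. Two trivial slips: you cite $D_v^1,D_v^2,D_v^3$, which live in the proof of the $III$-regularity proposition rather than Proposition \ref{Iesttheta}, and in the radial tail your threshold $\bigl||\zeta_1|-|\zeta_2|\bigr|^{(1-\sigma)/2}$ requires the sharper $e^{-x}\le C_k x^{-k}$ with $k=\tfrac{2\sigma}{1-\sigma}$ rather than just $k=1$ to cover all $\sigma<\tfrac12$ (the paper sidesteps this by using $\sqrt{|\zeta_1-\zeta_2|}$ as the threshold); neither affects the result.
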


\begin{proof}  Notice that the inequality holds when $|x-y|$ or $|\zeta_1-\zeta_2|$ are large compare with $d_0$. We only need the deal the case when both of them are small.  We can break the proof into the following three estimates:
 \begin{align}
&\label{II1holder}|II(x,\zeta)-II(y,\zeta)|\leq C(1+d_0^{-1})^{2}|x-y|^{\sigma},\\&\label{II2holder}|II(x,\bar{\zeta_1})-II(x,\zeta_2)|\leq C(1+d_0^{-1})^{3}|\zeta_1-\zeta_2|^{\sigma},\\\label{II3holder}
&|II(x,\bar{\zeta_1})-II(x,\zeta_1)|\leq C(1+d_0^{-1})^{2}|\zeta_1-\zeta_2|^{\sigma}.
\end{align}   In the proof of \eqref{II1holder}, we may assume $\tau_-(y,\zeta)\geq\tau_-(x,\zeta)$. 
We will apply estimate  \eqref{Ixestimate} in Proposition \ref{Iesttheta} in our proof. Notice that  \eqref{Ixestimate}  depends on the distance and breaks down  at  
boundary. However, thank to the help of the integration over $s$ in $II$, we still can have the desired estimate.  To carry out the analysis, we fist dyadically  decompose $\Omega$ as fallows. Let ${M_n}=\{x\in\Omega|d(x,\partial{\Omega})\leq d_02^{-n}\}$ for integer $n\geq 1$. Let $D_0=\Omega\setminus M_1$ and $D_n=M_n\setminus M_{n+1}$ for $n\geq1$. Let $N$ be the integer  such that $8|x-y|>2^{-N}d_0\geq 4|x-y|$. $D_n$ for $0 \leq n\leq N$ together with $M_{N+1}$ form a decomposition of $\Omega$. Let $X_0=p(x,\zeta)$ and $Y_0=p(y,\zeta)$. Let  $Z_n$ be the intersection of $\overline{xX_0}$ with
$\partial M_n\setminus \partial \Omega$ for $ n\geq 1$ and $Z_0=x$.  Applying Proposition \ref{distanceXzest}, we have\begin{equation}
|\overline {XZ_n}|\leq R d_0^{-1}2^{-n} d_0\leq R 2^{-n}.
\end{equation}
For $n\geq1$,
\begin{equation}\begin{split}
&\Delta II_d^n:=\left|\int_{\frac{|\overline{Z_nx}|}{|\zeta|}}^{\frac{|\overline{Z_{n+1}x}|}{|\zeta|}}e^{-\nu s }\int_{\mathbb{R}^3}k(\zeta,\zeta')[I(x-s\zeta,\zeta')-I(y-s\zeta,\zeta')]d\zeta'ds\right|\\&\leq C\left|\int_{\frac{|\overline{Z_nx}|}{|\zeta|}}^{\frac{|\overline{Z_{n+1}x}|}{|\zeta|}}e^{-\nu s }\int_{\mathbb{R}^3}k(\zeta,\zeta')(1+2^{n+2}d_0^{-1})^{2\sigma}|x-y|^{\sigma}d\zeta'ds\right|\\&\leq C(1+2^{n+2}d_0^{-1})^{2\sigma}|x-y|^{\sigma}\left|\int_{\frac{|\overline{Z_nx}|}{|\zeta|}}^{\frac{|\overline{Z_{n+1}x}|}{|\zeta|}}e^{-\nu s }ds\right|
\\&\leq C(1+2^{n+2}d_0^{-1})^{2\sigma}|x-y|^{\sigma}R2^{-n}|\zeta|^{-1}e^{-\frac{\nu_0d_0}{2|\zeta|}}
\\&\leq C2^{-(1-2\sigma)n}(1+d_0^{-1})^{1+2\sigma}   |x-y|^{\sigma}.
\end{split}\end{equation}
Notice that $1-2\sigma>0 $ and therefore \begin{equation}\sum_{n=1}^NII_d^n\leq C(1+d_0^{-1})^{1+2\sigma}   |x-y|^{\sigma}.
 \end{equation}
 For  the segment within $D_0$,
 \begin{equation}\label{D01est}\begin{split}
&\Delta II_d^0:=\left|\int^{\frac{|\overline{Z_1x}|}{|\zeta|}}_0e^{-\nu(|\zeta|) s }\int_{\mathbb{R}^3}k(\zeta,\zeta')[I(x-s\zeta,\zeta')-I(y-s\zeta,\zeta')]d\zeta'ds\right|\\&\leq C(1+4d_0^{-1})^{2\sigma}|x-y|^{\sigma}\int^{\frac{|\overline{Z_1x}|}{|\zeta|}}_0e^{-\nu(|\zeta|) s }\int_{\mathbb{R}^3}|k(\zeta,\zeta')|d\zeta'ds\\&\leq C(1+d_0^{-1})^{2\sigma}|x-y|^{\sigma}\left(\frac1{\nu(|\zeta|)}-\frac1{\nu(|\zeta|)}e^{-\nu(|\zeta|)\frac{|\overline{Z_1x}|}{|\zeta|}}\right)
\\&\leq C(1+d_0^{-1})^{2\sigma}|x-y|^{\sigma}.\end{split}\end{equation}
 
 For the remaining part, we have
 \begin{equation}\begin{split}
 &\left|\int_{\frac{|\overline{Z_{N+1}x}|}{|\zeta|}}^{\tau_-(x,\zeta)}e^{-\nu s }\int_{\mathbb{R}^3}k(\zeta,\zeta')I(x-s\zeta,\zeta')d\zeta'ds\right|\\&\quad+\left|\int_{\frac{|\overline{Z_{N+1}x}|}{|\zeta|}}^{\tau_-(y,\zeta)}e^{-\nu s }\int_{\mathbb{R}^3}k(\zeta,\zeta')I(y-s\zeta,\zeta')d\zeta'ds\right| \\&\leq C(1+d_0^{-1})\frac{|x-y|}{|\zeta|}e^{-\frac{\nu_0d_0}{2|\zeta|}}\\&\leq
 C(1+d_0^{-1})^2|x-y|,\end{split}
 \end{equation} where the definition of $N$ and Proposition \ref{G12} were used.  Hence, we proved \eqref{II1holder}. 
 
 Now, we proceed to $\eqref{II2holder}$.  Let $\theta$ be the angle between $\bar{\zeta_1} $ and $\zeta_2$. Without lost of generality, we may assume $\tau_-(x, \bar{\zeta_1})\leq \tau_-(x,\zeta_2)$. Let $P_1=p(x,\bar{\zeta_1})$ and $P_2=p(x,\zeta_2)$. Again, we only need to prove the case when $|\bar{\zeta_1}-\zeta_2|\leq 1$. Let $\theta$ be the angle between $\zeta_1$ and $\zeta_2$. We adopt the same dyadic decomposition above and stop at $N'$ such that 
 $8|\overline{P_1x}|\theta>2^{-N'}d_0\geq 4|\overline{P_1x}|\theta$.  Let $S_n$ be the intersection between $\overline{P_1x}$ and $\partial M_n\setminus \partial \Omega$. 

 \begin{equation}\begin{split}
 &\Delta II^n:=\left|\int_{\frac{|\overline{xS_n}|}{|\zeta_2|}}^{\frac{|\overline{xS_{n+1}}|}{|\zeta_2|}} e^{-\nu s }\int_{\mathbb{R}^3}k(\bar{\zeta_1},\zeta')I(x-s\bar{\zeta_1},\zeta')-k(\zeta_2,\zeta')I(x-s\zeta_2,\zeta')d\zeta'ds\right|\\&\leq \left|\int_{\frac{|\overline{xS_n}|}{|\zeta_2|}}^{\frac{|\overline{xS_{n+1}}|}{|\zeta_2|}} e^{-\nu s }\int_{\mathbb{R}^3}k(\bar{\zeta_1},\zeta')[I(x-s\bar{\zeta_1},\zeta')-I(x-s\zeta_2,\zeta')]d\zeta'ds\right|\\&\quad+\left|\int_{\frac{|\overline{xS_n}|}{|\zeta_2|}}^{\frac{|\overline{xS_{n+1}}|}{|\zeta_2|}} e^{-\nu s }\int_{\mathbb{R}^3}[k(\bar{\zeta_1},\zeta')-k(\zeta_2,\zeta')]I(x-s\zeta_2,\zeta')d\zeta'ds\right|\\&=:DI^n+DK^n.
 \end{split}
 \end{equation}
 $DI^n$ can be bounded by a similar argument:
 
 \begin{equation}\begin{split}
&DI^n\leq C\left|\int_{\frac{|\overline{xS_n}|}{|\zeta_2|}}^{\frac{|\overline{xS_{n+1}}|}{|\zeta_2|}} e^{-\nu s }\int_{\mathbb{R}^3}k(\bar{\zeta_1},\zeta')(1+2^{n+2}d_0^{-1})^{2\sigma}(\frac{|\bar{\zeta_1}-\zeta_2|}{|\zeta_2|})^{\sigma}d\zeta'ds\right|\\&\leq C(1+2^{n+2}d_0^{-1})^{2\sigma}(\frac{|\bar{\zeta_1}-\zeta_2|}{|\zeta_2|})^{\sigma}\left|\int_{\frac{|\overline{xS_n}|}{|\zeta_2|}}^{\frac{|\overline{xS_{n+1}}|}{|\zeta_2|}} e^{-\nu s }ds\right|
\\&\leq C(1+2^{n+2}d_0^{-1})^{2\sigma}(\frac{|\bar{\zeta_1}-\zeta_2|}{|\zeta_2|})^{\sigma}R2^{-n}|\zeta_2|^{-1}e^{-\frac{\nu_0d_0}{2|\zeta_2|}}
\\&\leq C2^{-(1-2\sigma)n}(1+d_0^{-1})^{1+3\sigma}  |\bar{\zeta_1}-\zeta_2|^{\sigma}.
\end{split}\end{equation}
Therefore, 
\begin{equation}
\sum_1^{N'}DI^n\leq C(1+d_0^{-1})^{\frac52}   |\bar{\zeta_1}-\zeta_2|^{\sigma}.
\end{equation}
For $DK^n$ part, from the assumption, we observe that $I(x,\zeta)$ is bounded.
Therefore, we have
\begin{equation}
\left|\int_{\mathbb{R}^3}[k(\bar{\zeta_1},\zeta')-k(\zeta_2,\zeta')]I(x-s\zeta_2,\zeta')d\zeta'\right|\leq C|\bar{\zeta_1}-\zeta_2|.
\end{equation}
Hence,\begin{equation}\begin{split}
&DK^n\leq  C|\bar{\zeta_1}-\zeta_2|\left|\int_{\frac{|\overline{xS_n}|}{|\zeta_2|}}^{\frac{|\overline{xS_{n+1}}|}{|\zeta_2|}} e^{-\nu s }ds\right|\\&\leq C|\bar{\zeta_1}-\zeta_2| \frac{R2^{-n}}{|\zeta_2|}e^{-\frac{\nu_0d_0}{2|\zeta_2|}}\\&\leq C(1+d_0^{-1})2^{-n}|\bar{\zeta_1}-\zeta_2|. \end{split}\end{equation}
As a result,
\begin{equation}\sum_{n=1}^{N'} DK_n\leq  C(1+d_0^{-1})|\bar{\zeta_1}-\zeta_2|.\end{equation}
For the segment within $D_0$,
\begin{equation}\label{D02est}\begin{split}
 &\Delta II^0:=\left|\int_{0}^{\frac{|\overline{xS_1}|}{|\zeta_2|}} e^{-\nu s }\int_{\mathbb{R}^3}k(\bar{\zeta_1},\zeta')I(x-s\bar{\zeta_1},\zeta')-k(\zeta_2,\zeta')I(x-s\zeta_2,\zeta')d\zeta'ds\right|\\&\leq C\int_{0}^{\frac{|\overline{xS_1}|}{|\zeta_2|}} e^{-\nu s }\left[(1+d_0^{-1})^{2\sigma}(s|\bar{\zeta_1}-\zeta_2|)^{\sigma}+|\bar{\zeta_1}-\zeta_2|\right]ds\\&\leq C(1+d_0^{-1})^{2\sigma}|\bar{\zeta_1}-\zeta_2|^{\sigma}.\end{split}\end{equation}
 Similarly, we can also establish bounds for the remaining part because the domain of integration is small and distance from $x$ is big.  Hence, we proved \eqref{II2holder}. 
 
 Finally  we are going to prove \eqref{II3holder}.  We also need the dyadic decomposition used above. Let $\bar{N}$ be the integer such that
 \begin{equation}
 4|\zeta_2-\zeta_1|\geq   2^{-\bar{N}}d_0\geq 2|\zeta_2-\zeta_1|.
 \end{equation} Without lost generality, we may assume $|\bar{\zeta_1}|\leq |\zeta_1|$. Let $S_0=x$.
 For $1\leq n\leq \bar{N}$,
 \begin{equation}\begin{split}
 &DV^{n}:=\\&\left|\int_{\frac{|\overline{S_nx}|}{|\zeta_1|}}^{\frac{|\overline{S_{n+1}x}|}{|\zeta_1|}}\int_{\mathbb{R}^3}e^{-\nu(|\zeta_2|)s}k(\bar{\zeta_1},\zeta')I(x-s\bar{\zeta_1},\zeta')-e^{-\nu(|\zeta_1|)s}k({\zeta_1},\zeta')I(x-s{\zeta_1},\zeta')d\zeta'ds\right|
 \\&\leq\Big|\int_{\frac{|\overline{S_nx}|}{|\zeta_1|}}^{\frac{|\overline{S_{n+1}x}|}{|\zeta_1|}}\int_{\mathbb{R}^3}\left[e^{-\nu(|\zeta_2|)s}-e^{-\nu(|\zeta_1|)s}\right]k(\bar{\zeta_1},\zeta')I(x-s\bar{\zeta_1},\zeta')
 \\&\quad+e^{-\nu(|\zeta_1|)s}\left[k(\bar{\zeta_1},\zeta')-k({\zeta_1},\zeta')\right]I(x-s\bar{\zeta_1},\zeta')\\&\quad+e^{-\nu(|\zeta_1|)s}k({\zeta_1},\zeta')\left[I(x-s\bar{\zeta_1},\zeta')-I(x-s{\zeta_1},\zeta')\right]d\zeta'ds\Big|\\&\leq C\left[(1+d_0^{-1})|\zeta_2-\zeta_1|2^{-n}+(1+d_0^{-1})^{1+2\sigma}|\zeta_2-\zeta_1|^{\sigma}2^{-(1-2\sigma)n}\right].
\end{split}\end{equation}
Hence,  \begin{equation}\sum_{n=1}^{\bar{N}}DV^n\leq C(1+d_0^{-1})^{1+2\sigma}||\zeta_2-\zeta_1|^{\sigma}.
\end{equation}
We can bound the contribution from the  segment within $D_0$ similar to \eqref{D01est} and \eqref{D02est}.

For the remaining part, we define

\begin{equation}\begin{split}
&RV:=\\&\left|\int_{\frac{|\overline{S_{\bar{N}+1}x}|}{|\zeta_1|}}^{\frac{|\overline{P_1x}|}{|\zeta_2|}} \int_{\mathbb{R}^3}e^{-\nu(|\zeta_2|)s}k(\bar{\zeta_1},\zeta')I(x-s\bar{\zeta_1},\zeta')-e^{-\nu(|\zeta_1|)s}k(\zeta_2,\zeta')I(x-s\zeta_2,\zeta')d\zeta'ds\right|.\end{split}\end{equation}
When $|\zeta_1|>2\sqrt{|\zeta_1-\zeta_2|}$, $|\zeta_2|>\sqrt{|\zeta_1-\zeta_2|}$.
Therefore,

\begin{equation}\begin{split}
&RV\leq C\int_{\frac{|\overline{S_{\bar{N}+1}x}|}{|\zeta_1|}}^{\frac{|\overline{P_1x}|}{|\zeta_2|}} e^{-\nu_0s}ds\leq  C e^{-\frac{\nu_0d_0}{2|\zeta_1|}}\left[\frac{|\overline{P_1S_{\bar{N}+1}}|}{|\zeta_2|}+|\overline{S_{\bar{N}+1}x}|(\frac1{|\zeta_2|}-\frac1{|\zeta_1|})\right]\\&\leq  C e^{-\frac{\nu_0d_0}{2|\zeta_1|}}\left[(1+d_0^{-1})\frac{|\zeta_1-\zeta_2|}{|\zeta_2|}+\frac{\big||\zeta_1|-|\zeta_2|\big|}{|\zeta_2||\zeta_1|}\right]\\&\leq C(1+d_0^{-1})\frac{|\zeta_1-\zeta_2|}{|\zeta_2|}\leq C(1+d_0^{-1}){|\zeta_1-\zeta_2|^\frac12}.\end{split}\end{equation} 
 For the other case,  $2\sqrt{|\zeta_1-\zeta_2|}\geq|\zeta_1|\geq|\zeta_2|$.
 \begin{equation}\begin{split}
&RV\leq C\int_{\frac{|\overline{S_{\bar{N}+1}x}|}{|\zeta_1|}}^{\frac{|\overline{P_1x}|}{|\zeta_2|}} e^{-\nu_0s}ds\leq  C(e^{-\nu_0\frac{|\overline{S_{\bar{N}+1}x}|}{|\zeta_1|}}+e^ {-\nu_0\frac{|\overline{P_1x}|}{|\zeta_2|}}) \\&\leq Cd_0^{-1}(|\zeta_1|+|\zeta_2|)\leq Cd_0^{-1}\sqrt{|\zeta_1-\zeta_2|}.\end{split}\end{equation} 

Hence, we finished the proof.\end{proof}
 
  \section{Gaining of integrability \label{convolution}}
  We devote  this final section to the proof of the following proposition.
  \begin{prop}Suppose $f\in L^*_{x,\zeta}$ solves the stationary linearized Boltzmann equation \eqref{SBE} for gases with cutoff hard potential or cutoff Maxwellian gases, \eqref{IP},   and  there exists $\phi(\zeta)\in L^*_\zeta$ such that
  \begin{equation}
  |f(X,\eta)|<\phi(\zeta),
  \end{equation}
  for any $(X,\eta)\in \Gamma_-$.
  Then, $f\in L^\infty_xL^*_\zeta$.  
 
 Moreover, if  we further assume that  for a fix $0<\alpha<\frac12$ there exist $M$ such that
  \begin{equation}
f(X,\eta)-f(Y,\omega)|\leq M\left( |\eta-\omega|^2+|X-Y|^2\right)^{\frac{\sigma}{2}} \end{equation}
for any $(X,\eta)$, $(Y, \omega)\in\Gamma_-$,
then $f\in L^\infty_{x,\zeta}$.
  
  \end{prop}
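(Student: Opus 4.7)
The plan is to establish the two conclusions in sequence. First, using only the pointwise $\phi$-bound on the incoming data, I would boost the solution from $L^*_{x,\zeta}$ to $L^\infty_x L^*_\zeta$; then, adding the H\"older hypothesis, I would upgrade further to $L^\infty_{x,\zeta}$.

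For the first step, I would work with the twice-iterated integral equation $f = I + II + III$ from \eqref{123def} and bound $\sup_x \|\cdot\|_{L^*_\zeta}$ term by term. The piece $I$ is dominated pointwise by $\phi(\zeta)$, so trivially $\|I(x,\cdot)\|_{L^*_\zeta} \le \|\phi\|_{L^*_\zeta}$. For $II$, Cauchy--Schwarz on the $\zeta'$ integral together with the uniform bounds $\int |k(\zeta,\zeta')|\,d\zeta' \le C$ and $\int |k(\zeta,\zeta')|/\nu(\zeta')\,d\zeta' \le C(1+|\zeta|)^{-1}$---both consequences of \eqref{estimateK} and Proposition \ref{cafdecay}---reduce the $L^*_\zeta$ estimate to a multiple of $\|\phi\|_{L^*_\zeta}$. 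The essential piece is $III$, where $f$ itself appears through an iterated $K$. Here I would use the spatial representation \eqref{Gformula}, so that $III$ is recast as an integral over $\Omega \times \mathbb{R}^+$ of $K(f)(y,\zeta') = \int k(\zeta',\zeta'') f(y,\zeta'')\,d\zeta''$ against a kernel built from $k(\zeta,\zeta')$, the damping $e^{-\nu(\rho)|x_0-y|/\rho}$, and the Jacobian $\rho/|x_0-y|^2$. Applying Cauchy--Schwarz with measure $\nu(\zeta'')\,d\zeta''\,dy$ then extracts a factor $\|f\|_{L^*_{x,\zeta}}$ from $f$, and the remaining deterministic double-kernel integral is shown to be finite uniformly in $(x,\zeta)$ by switching back to velocity coordinates for the relevant sub-integration and invoking Proposition \ref{cafdecay}.

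For the second step, I would first observe that H\"older continuity on $\Gamma_-$ together with $|f|\le\phi\in L^*_\zeta$ forces $f|_{\Gamma_-}$ to be bounded. Indeed, if $|f(X,\zeta_n)|\ge c>0$ along a sequence $|\zeta_n|\to\infty$, the H\"older estimate would maintain $|f|\ge c/2$ on balls of fixed radius $(c/(2M))^{1/\sigma}$ around each $\zeta_n$; since $\nu(\zeta_n)\to\infty$, this contradicts $\int |f(X,\zeta)|^2\nu(|\zeta|)\,d\zeta \le \|\phi\|_{L^*_\zeta}^2$. Next, given $f\in L^\infty_x L^*_\zeta$, Cauchy--Schwarz on the velocity integral yields
\begin{equation*}
|K(f)(x,\zeta)| \le \left(\int_{\mathbb{R}^3} \frac{|k(\zeta,\zeta')|^2}{\nu(\zeta')}\,d\zeta'\right)^{1/2}\|f(x,\cdot)\|_{L^*_\zeta},
\end{equation*}
and the first factor is uniformly bounded in $\zeta$ upon applying Proposition \ref{cafdecay} to the square of \eqref{estimateK}. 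Feeding this bound into \eqref{inteq} and estimating the boundary contribution by $\|f|_{\Gamma_-}\|_{L^\infty}$ and the volume contribution by $\|K(f)\|_{L^\infty}/\nu_0$ closes the argument.

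The main obstacle is the $III$ estimate, specifically handling the singular Jacobian $\rho/|x_0-y|^2$ in \eqref{Gformula}. A direct Cauchy--Schwarz over the spatial variable would not produce an obviously integrable double kernel, and the singular weights must be distributed carefully---reverting $dy$ to $d\zeta'$ coordinates where one gains the cancelling $|x_0-y|^2$ Jacobian for part of the integration, while pairing the remaining singular factors with the exponential damping $e^{-\nu(\rho)|x_0-y|/\rho}$ and with $k(\zeta,\zeta')$---before the kernel estimates close in the Caflisch form from Proposition \ref{cafdecay}.
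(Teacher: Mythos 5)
The first and central step---upgrading $f$ from $L^*_{x,\zeta}$ to $L^\infty_x L^*_\zeta$---is where your proposal has a genuine gap. You correctly flag the Jacobian $\rho/|x_0-y|^2$ as the obstacle, but the plan of Cauchy--Schwarz to ``extract a factor $\Vert f\Vert_{L^*_{x,\zeta}}$'' and then show the remaining kernel is finite uniformly cannot work: once you square the kernel against the spatial variable, you face $\int_\Omega |x_0-y|^{-4}\,dy$ near $y=x_0$, which diverges in $\mathbb{R}^3$, and the exponential damping $e^{-\nu(\rho)|x_0-y|/\rho}$ gives no help at small $|x_0-y|$. More fundamentally, this is an attempt to pass from $L^2_x$ to $L^\infty_x$ in a single application of a convolution kernel that is at best in $L^p$ for $p<3/(2-\alpha)<3$; Young's inequality simply does not reach $L^\infty$ from $L^2$ with such a kernel. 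The paper's Section \ref{convolution} explicitly avoids this by following the Golse--Poupaud bootstrap: it works with the \emph{once}-iterated equation $f=\tilde g+\tilde f$, applies Cauchy--Schwarz in the $s$-variable with weights $s^{-\alpha}$ and $s^{\alpha}$ (not in $y$), uses the pointwise decay $|K(f)(y,\cdot)|\lesssim\Vert f(y,\cdot)\Vert_{L^*_\zeta}(1+|\zeta|)^{-(3-\gamma)/2}$ from Proposition \ref{Kdecay}, and arrives at the integrable convolution bound $\Vert\tilde f(x,\cdot)\Vert_{L^*_\zeta}^2\lesssim\big(\chi_{(0,R)}|x|^{-(2-\alpha)}\big)*\Vert f(\cdot,\cdot)\Vert^2_{L^*_\zeta}$ of Lemma \ref{fconvalution}. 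That is then iterated finitely many times via Young's inequality, each step raising the integrability index, until $L^\infty_x$ is reached. Your reliance on the twice-iterated form $I+II+III$ does not rescue this: the extra $K$ improves velocity decay but leaves the spatial singularity untouched.

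Your second step (from $L^\infty_x L^*_\zeta$ plus the H\"older hypothesis to $L^\infty_{x,\zeta}$) is essentially sound. The compactness-style contradiction argument works, though the paper replaces it by a clean, quantitative interpolation: if $|f(X,\zeta_0)|=A$, H\"older continuity keeps $|f|\geq A/2$ on a ball of radius $(A/2M)^{1/\sigma}$, and comparing to the $L^*_\zeta$ bound yields $A\lesssim M^{3/(3+2\sigma)}\Vert f\Vert_{L^\infty_x L^*_\zeta}^{2\sigma/(3+2\sigma)}$, which is the estimate the paper records; feeding this and the $L^\infty$ bound on $K(f)$ into \eqref{inteq} finishes. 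So the conclusion you reach there is correct, but the crux of the proposition lives entirely in step one, and there your argument does not close.
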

To prove $f\in L^\infty_xL^*_\zeta$,
  we  first use the integral equation \eqref{inteq} and name the former term on the right hand side as $\tilde{g}(x,\zeta)$ and the latter as $\tilde{f}(x,\zeta)$.   From the assumption, we know that both $\Vert \tilde{g}\Vert_{L^1_xL^*_\zeta} $ and $\Vert \tilde{g}\Vert_{L^\infty_xL^*_\zeta} $ are bounded,  therefore, through interpolation, so is $\Vert \tilde{g}\Vert_{L^p_xL^*_\zeta} $   for any $p\in[1,\infty]$.
We perform zero extension for $f(x,\zeta)$ outside $\Omega$ and, with abuse of notation, still call it $f$.  We use $*$ to denote the convolution in space.
The following  estimate is to be proved.
\begin{lem}\label{fconvalution}Let $0<\alpha<1$.  $f$ and $\tilde{f}$ are as defined above. Then, there exists a constant $C_{11}$ only depending on the potential such that
\begin{equation}\label{Fconv}  
 \Vert \tilde{f}(x,\cdot)\Vert_{L^*_\zeta}^2\leq C_{11} (\frac{\chi_{(0,R)}(|x|)}{|x|^{2-\alpha}})* \Vert f(x,\cdot) \Vert_{L^*_\zeta}^2 ,
 \end{equation}
where $R$ is the diameter of $\Omega$ and $\chi_{(0,R)}$ is the characteristic function of $(0,R)$.\end{lem}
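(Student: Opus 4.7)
My plan is to exploit the damped-transport structure of $\tilde f$ through a weighted Cauchy--Schwarz in the parameter $s$, apply a Caflisch-type bound on $|K(f)(y,\zeta)|^2$, and then convert the resulting $(s,\zeta)$-integral into a spatial convolution via the transport change of variables $y=x-\zeta s$. The weight $s^{\alpha/2}$ is chosen so that an $|x-y|^{\alpha}$ factor is manufactured on the spatial side, upgrading the natural $|x-y|^{-2}$ singularity that arises from the transport Jacobian into the claimed $|x-y|^{-(2-\alpha)}$.

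Concretely, for $\alpha\in(0,1)$ I pair $e^{-\nu(|\zeta|)s/2}s^{\alpha/2}|K(f)|$ with $e^{-\nu(|\zeta|)s/2}s^{-\alpha/2}$ in $\tilde f(x,\zeta)=\int_0^{\tau_-(x,\zeta)}e^{-\nu(|\zeta|)s}K(f)(x-\zeta s,\zeta)\,ds$ and apply Cauchy--Schwarz in $s$. Because $\int_0^\infty e^{-\nu(|\zeta|)s}s^{-\alpha}ds=\Gamma(1-\alpha)\nu(|\zeta|)^{\alpha-1}$ is finite (this is where $\alpha<1$ is used),
\begin{equation*}
\nu(|\zeta|)|\tilde f(x,\zeta)|^2\le C_\alpha\,\nu(|\zeta|)^{\alpha}\int_0^{\tau_-(x,\zeta)}e^{-\nu(|\zeta|)s}s^{\alpha}|K(f)(x-\zeta s,\zeta)|^2\,ds.
\end{equation*}
A second Cauchy--Schwarz in $\zeta_*$, with weight $\nu(|\zeta_*|)$, yields
\begin{equation*}
|K(f)(y,\zeta)|^2\le A(\zeta)\,\|f(y,\cdot)\|^2_{L^*_\zeta},\qquad A(\zeta):=\int_{\mathbb R^3}\frac{|k(\zeta,\zeta_*)|^2}{\nu(|\zeta_*|)}\,d\zeta_*.
\end{equation*}
Combining \eqref{estimateK}, the inequality $(1+|\zeta|+|\zeta_*|)^{-2(1-\gamma)}\le(1+|\zeta|)^{-(1-\gamma)}(1+|\zeta_*|)^{-(1-\gamma)}$, a splitting into $\{|\zeta_*|\ge|\zeta|/2\}$ (where $(1+|\zeta_*|)^{-1}\le C(1+|\zeta|)^{-1}$) and its complement (controlled by the Gaussian $e^{-c|\zeta|^2}$), and Proposition~\ref{cafdecay} with $\epsilon=1$, I obtain $A(\zeta)\le C(1+|\zeta|)^{-(3-\gamma)}$.

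I then integrate over $\zeta$ and change variables by $\zeta=\rho\omega$, $r=\rho s$, $y=x-r\omega$. The Jacobian reads $d\zeta\,ds=\rho\,d\rho\,dy/|x-y|^2$, and the weight becomes $s^\alpha=|x-y|^\alpha\rho^{-\alpha}$. The $|x-y|^\alpha$ factor combines with the Jacobian's $|x-y|^{-2}$ to produce exactly $|x-y|^{-(2-\alpha)}$, while all $\omega$-dependence is absorbed into $dy$. What remains is
\begin{equation*}
\|\tilde f(x,\cdot)\|^2_{L^*_\zeta}\le C_\alpha\int_\Omega\frac{\|f(y,\cdot)\|^2_{L^*_\zeta}}{|x-y|^{2-\alpha}}\left[\int_0^\infty\nu(\rho)^\alpha A(\rho)\rho^{1-\alpha}e^{-\nu(\rho)|x-y|/\rho}\,d\rho\right]dy.
\end{equation*}

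The main step is to show that the inner $\rho$-integral is bounded uniformly in $|x-y|$. Using $e^{-\nu(\rho)|x-y|/\rho}\le 1$, it is enough to control $\int_0^\infty\nu(\rho)^\alpha A(\rho)\rho^{1-\alpha}\,d\rho$. Near $\rho=0$ the integrand is $O(\rho^{1-\alpha})$ and hence integrable because $\alpha<1$. At $\rho\to\infty$ the decay $A(\rho)\le C(1+\rho)^{-(3-\gamma)}$ combined with $\nu(\rho)^\alpha\rho^{1-\alpha}\le C\rho^{\alpha\gamma+1-\alpha}$ produces an asymptotic exponent $-2+\gamma-\alpha(1-\gamma)$, which is strictly less than $-1$ precisely when $\gamma<1$. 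This is where the cutoff constraint $0\le\gamma<1$ enters in an essential way and is also the reason the hard-sphere case $\gamma=1$ lies outside the scope of the lemma. Since $|x-y|\le R$ whenever $x,y\in\Omega$ and $f$ is extended by zero outside $\Omega$, the right-hand side is precisely $C_{11}(\chi_{(0,R)}(|\cdot|)/|\cdot|^{2-\alpha})*\|f(\cdot,\cdot)\|^2_{L^*_\zeta}(x)$, as claimed.
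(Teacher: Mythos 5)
Your proof is correct and follows essentially the same route as the paper: the weighted Cauchy--Schwarz in $s$ with weight $s^{\pm\alpha/2}$, a Cauchy--Schwarz in $\zeta_*$ giving the decay $|K(f)(y,\zeta)|^2\le C(1+|\zeta|)^{-(3-\gamma)}\|f(y,\cdot)\|_{L^*_\zeta}^2$ (the paper isolates this as a separate proposition, proved via Proposition~\ref{cafdecay} with $\epsilon=1$, whereas you inline it with a slightly different region split), and then the change of variables $\zeta=\rho\omega$, $r=\rho s$, $y=x-r\omega$ producing the kernel $|x-y|^{-(2-\alpha)}$ and the $\rho$-integral with exponent $\gamma(1+\alpha)-3$, which converges exactly when $\gamma<1$. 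The minor differences (distributing the exponential symmetrically in the first Cauchy--Schwarz and retaining then discarding the factor $e^{-\nu(\rho)|x-y|/\rho}$) are cosmetic and do not change the argument.
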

Let $1<p<\frac3{2-\alpha}$, $q, r \in [0,\infty]$.  Applying Young's inequality to the convolution above, we can conclude that, if \begin{equation}\frac2r+1=\frac1p+\frac2q,
 \end{equation}
 then $f\in L^q_xL^*_\zeta$ implies $\tilde{f}\in L^r_xL^*_\zeta$. 
 Starting from $q=2$, through finite times of iteration, we can obtain that $f\in L^\infty_xL^*_\zeta$.  Notice that the hard sphere case is barely missed. 
 
 The fact $f\in L^\infty_xL^*_\zeta$ together with H\"{o}lder continuity assumption on the boundary data  implies
 \begin{equation}|f(X,\zeta)|\leq C_{12}M^\frac{3}{3+2\sigma}\Vert f\Vert_{L^\infty_xL^*_\zeta}^{\frac{2\sigma}{3+2\sigma}},\end{equation} where $C_{12}=2(\frac{3}{4\pi\nu_0})^{\frac{\sigma}{3+2\sigma}}$ for any $(X,\zeta)\in \Gamma_-$.  Then, from the integral equation \eqref{inteq}, we can conclude $f\in L^\infty_{x,\zeta}$.
 
  In order to prove Lemma  \ref{fconvalution}, we shall first estimate the decay of $K(f)$:      \begin{prop} \label{Kdecay} For any $0\leq\gamma\leq1$, 
   \begin{equation}
   |K(f)| \leq C \Vert f\Vert_{L^*_\zeta}(1+|\zeta|)^{-\frac{3-\gamma}{2}}.
   \end{equation}
   
   The constant $C$ above may depend on the potential. 
  \end{prop}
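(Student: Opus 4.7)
The plan is to reduce the problem to a kernel integral via Cauchy--Schwarz and then exploit the Caflisch-type decay in Proposition \ref{cafdecay}. Writing the $L^*_\zeta$ norm as weighted $L^2$ in $\zeta_*$ and pairing with $\nu(|\zeta_*|)^{1/2}$ on $f$:
\begin{equation*}
|K(f)(\zeta)|^2 \leq \left( \int_{\mathbb{R}^3} \frac{|k(\zeta,\zeta_*)|^2}{\nu(|\zeta_*|)} \, d\zeta_* \right) \|f\|_{L^*_\zeta}^2,
\end{equation*}
so everything is reduced to showing $\int |k(\zeta,\zeta_*)|^2 \nu(|\zeta_*|)^{-1} \, d\zeta_* \leq C(1+|\zeta|)^{-(3-\gamma)}$.

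Inserting the pointwise bound \eqref{estimateK} and $\nu(|\zeta_*|)^{-1} \leq \nu_0^{-1}(1+|\zeta_*|)^{-\gamma}$, the integrand is controlled by
\begin{equation*}
\frac{C \,(1+|\zeta|+|\zeta_*|)^{-2(1-\gamma)} (1+|\zeta_*|)^{-\gamma}}{|\zeta-\zeta_*|^2}\, e^{-\frac{1-\delta}{2}\left[|\zeta-\zeta_*|^2 + (|\zeta|^2-|\zeta_*|^2)^2/|\zeta-\zeta_*|^2\right]}.
\end{equation*}
I would then split the $\zeta_*$-integration into the regions $R_1 = \{|\zeta_*| > |\zeta|/2\}$ and $R_2 = \{|\zeta_*| \leq |\zeta|/2\}$.

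On $R_1$, the condition $|\zeta_*| > |\zeta|/2$ yields $(1+|\zeta|+|\zeta_*|)^{-2(1-\gamma)} \leq (1+|\zeta|)^{-2(1-\gamma)}$ and $(1+|\zeta_*|)^{-\gamma} \leq C(1+|\zeta|)^{-\gamma}$, so the polynomial prefactor can be pulled outside the integral as $C(1+|\zeta|)^{-(2-\gamma)}$. Enlarging the domain to $\mathbb{R}^3$ and applying Proposition \ref{cafdecay} with $\epsilon = 1$ to the remaining singular-Gaussian integral $\int |\zeta-\zeta_*|^{-2} e^{-\cdots}\, d\zeta_*$ contributes an extra factor $(1+|\zeta|)^{-1}$, giving the target $C(1+|\zeta|)^{-(3-\gamma)}$.

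On $R_2$, the constraint $|\zeta_*| \leq |\zeta|/2$ forces $|\zeta-\zeta_*| \geq |\zeta|/2$ and $(|\zeta|^2-|\zeta_*|^2)^2/|\zeta-\zeta_*|^2 \geq c|\zeta|^2$; hence the second exponential already supplies $e^{-c|\zeta|^2}$, which dominates any polynomial loss and closes this region trivially using the volume bound on the ball. Taking square roots then yields the stated decay. The only conceptual care needed is in this case split: the factor $(1+|\zeta_*|)^{-\gamma}$ from $\nu^{-1}$ cannot be turned into $(1+|\zeta|)^{-\gamma}$ pointwise, so the region $\{|\zeta_*| \ll |\zeta|\}$ must be handled separately by Gaussian smallness rather than by polynomial bounds; this is precisely what allows the sharper exponent $3-\gamma$ instead of $3-2\gamma$.
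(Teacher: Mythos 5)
Your proof is correct and follows essentially the same route as the paper: Cauchy--Schwarz against $\nu^{1/2}f$, insertion of the pointwise kernel bound \eqref{estimateK}, a two-region split, and Proposition \ref{cafdecay} on the region where $|\zeta_*|\gtrsim|\zeta|$. The only (minor, favorable) difference is that the paper splits according to $|\zeta-\zeta_*|\lessgtr|\zeta|/2$, which leaves a $|\zeta|^{-2}$ factor near the origin that is then patched by remarking that $K(f)$ is also bounded, whereas your split according to $|\zeta_*|\lessgtr|\zeta|/2$ yields a Gaussian-decaying, origin-regular bound on the second region directly.
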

  \begin{proof}
  
  \begin{equation}\begin{split}
  |K(f)(\zeta)|=&\left|\int_{\mathbb{R}^3}k(\zeta,\zeta_*)f(\zeta_*)d\zeta_*\right|\\=&\left(\int_{\mathbb{R}^3}|k(\zeta,\zeta_*)|^2\frac1{|\nu(\zeta_*)|}d\zeta_*\right)^\frac12\left(\int_{\mathbb{R}^3}|\nu(\zeta_*)||f(\zeta_*)|^2d\zeta_*\right)^\frac12\\ \leq&\Vert f\Vert_{L^*_\zeta} \left(\int_{\mathbb{R}^3}\frac{e^{-\frac14\left(|\zeta-\zeta_*|^2+(\frac{|\zeta|^2-|\zeta_*|^2}{|\zeta-\zeta_*|})^2\right)}}{|\zeta-\zeta_*|^2(1+|\zeta|+|\zeta_*|)^{2(1-\gamma)}(1+|\zeta_*|)^\gamma}d\zeta_*\right)^\frac12\\ \leq &C
  \Vert f\Vert_{L^*_\zeta} \left(\int_{|\zeta-\zeta_*|\leq \frac{|\zeta|}{2}}\frac{e^{-\frac14\left(|\zeta-\zeta_*|^2+(\frac{|\zeta|^2-|\zeta_*|^2}{|\zeta-\zeta_*|})^2\right)}}{|\zeta-\zeta_*|^2(1+|\zeta|)^{2(1-\gamma)}(1+\frac{|\zeta|}2)^\gamma}d\zeta_*\right.\\&\left.+\int_{|\zeta-\zeta_*|>\frac{|\zeta|}2}\frac{e^{-\frac14|\zeta-\zeta_*|^2}}{|\zeta-\zeta_*|^2(1+|\zeta|)^{2(1-\gamma)}}d\zeta_*\right)^\frac12\\\leq&  C\Vert f\Vert_{L^*_\zeta}\left[ (|1+|\zeta|)^{-(3-\gamma)}+|\zeta|^{-2}(1+|\zeta|)^{-(2-2\gamma)}\right]^\frac12.
  \end{split}\end{equation}
  Notice that $K(f)$ is also bounded.  Therefore, we conclude the proposition.
  \end{proof}
  We are ready to prove Lemma \ref{fconvalution}
  
 \begin{proof}[proof of Lemma \ref{fconvalution}]

 We will use the spherical coordinate such that
  \begin{equation}
  \zeta'=(\rho\cos\theta,\rho\sin\theta\cos\phi,\rho\sin\theta\sin\phi)\end{equation}
  and also  $r=s|\zeta|$ as we did in Section \ref{Mix}.  We have
 \begin{equation}\begin{split}
 &\Vert \tilde{f}(x,\cdot)\Vert_{L^*_\zeta}^2=\int_{\mathbb{R}^3}|\nu(\zeta)|\left|\int_0^{\tau_-(x,\zeta)}e^{-\nu s}K(f)(x-\zeta s,\zeta)ds\right|^2d\zeta\\&\leq 
 \int_{\mathbb{R}^3}|\nu(\zeta)|\left|\int_0^{\tau_-(x,\zeta)}e^{-2\nu s} s^{-\alpha}ds\right|\left|\int_0^{\tau_-(x,\zeta)}|K(f)(x-\zeta s,\zeta)|^2s^\alpha ds\right|d\zeta
 \\&\leq C\int_{\mathbb{R}^3}(1+|\zeta|)^{\gamma\alpha}\int_0^{|\overline{p(x,\zeta)x}|} \Vert f(x-r\hat{\zeta},\cdot)\Vert^2_{L^*_\zeta}(1+|\zeta|)^{-(3-\gamma)}r^\alpha\frac1{|\zeta|^{1+\alpha}}drd\zeta\\&\leq
 C\int_0^\infty (1+\rho)^{\gamma(1+\alpha)-3}\rho^{1-\alpha}\\&\quad\quad\cdot\int_0^\pi \int_0^{2\pi}\int_0^{|\overline{p(x,\zeta)x}|}\Vert f(x-r\hat{\zeta},\cdot)\Vert^2_{L^*_\zeta}r^\alpha dr \sin\theta d\phi d\theta d\rho\\&\leq C\int_{\Omega} \frac{\Vert f(y,\cdot)\Vert^2_{L^*_\zeta}}{|x-y|^{2-\alpha}}dy\int_0^\infty (1+\rho)^{\gamma(1+\alpha)-3}\rho^{1-\alpha}d\rho\\&\leq C\int_{\Omega} \frac{\Vert f(y,\cdot)\Vert^2_{L^*_\zeta}}{|x-y|^{2-\alpha}}dy.\end{split}\end{equation}
 Notice that the hard sphere case, $\gamma=1$, is barely missed.
 We make an zero extension of $f$ outside $\Omega$, and, with abuse of notation, still call it $f$.  Then, we conclude the lemma.  \end{proof}
 
\subsection*{Acknowledgments}
This research  is supported in part by JSPS KAKENHI grant number 15K17572.

\end{document}